\newcommand{\widebar}[1]{\mkern 1.5mu\overline{\mkern-1.5mu#1\mkern-1.5mu}\mkern 1.5mu}
\newtheorem{theorem}{Theorem}[section]
\newtheorem{lemma}[theorem]{Lemma}
\newtheorem{proposition}[theorem]{Proposition}
\newtheorem*{proposition*}{Proposition}
\newtheorem*{questiona*}{Question A}
\newtheorem*{questionb*}{Question B}
\newtheorem*{theorem*}{Theorem}
\newtheorem*{question*}{Question}
\newtheorem*{question}{Question 1}
\theoremstyle{definition}
\newtheorem{definition}[theorem]{Definition}
\newtheorem{example}[theorem]{Example}
\newtheorem{remark}[theorem]{Remark}
\theoremstyle{remark}
\newcommand{\Z}{\mathbb{Z}}
\newcommand{\Q}{\mathbb{Q}}
\newcommand{\ve}{\varepsilon}
\newcommand{\co}{\mathcal{O}}
\title{Universal Quadratic forms and Dedekind Zeta functions}
\author{Vítězslav Kala}
\author{Mentzelos Melistas}
\address{Charles University, Faculty of Mathematics and Physics, Department of
Algebra, Sokolov\-sk\' a 83, 18600 Praha~8, Czech Republic}
\email[V.~Kala]{vitezslav.kala@matfyz.cuni.cz}
\address{Charles University, Faculty of Mathematics and Physics, Department of
Algebra, Sokolov\-sk\' a 83, 18600 Praha~8, Czech Republic}
\address{University of Twente, Department of Applied Mathematics, Drienerlolaan 5, 7522 NB Enschede, The Netherlands}
\email[M.~Melistas]{mentzmel@gmail.com}
\date{\today}
\thanks{
The authors were supported by Czech Science Foundation (GA\v CR) grant 21-00420M}
\begin{document}

\maketitle

\begin{abstract}
    We study universal quadratic forms over totally real number fields using Dedekind zeta functions. In particular, we prove an explicit upper bound for the rank of universal quadratic forms over a given number field $K$, under the assumption that the codifferent of $K$ is generated by a totally positive element. Motivated by a possible path to remove that assumption, we also investigate the smallest number of generators for the positive part of ideals in totally real numbers fields.
\end{abstract}

\section{Introduction}

The study of universal quadratic forms, i.e., positive definite quadratic forms that represent all natural numbers, has a rich and very long history, involving works of mathematicians such Diophantus, Brahmagupta, Fermat, Euler, and Gauss. Among the most important contributions of the modern era to the subject are the Conway--Schneeberger theorem (also called $15$-theorem) \cite{bhargava} and the $290$-theorem of Bhargava--Hanke \cite{bhargavahanke}.

A natural generalization is to replace $\mathbb{Z}$ with the ring of integers $\mathcal{O}_K$ of a totally real number field $K$. In this more general case a quadratic form is called universal if it is totally positive definite and represents all totally positive elements of $\mathcal{O}_K$. In 1941 Maa{\ss} \cite{mass} showed that the sum of three squares is universal over ${\mathbb{Q}(\sqrt{5})}$. Moreover, Siegel \cite{siegel1945} in 1945 proved that if the sum of any number of squares is universal over $K$, then $K=\mathbb{Q}$ or $\mathbb{Q}(\sqrt{5})$. On the other hand, the result of  Hsia, Kitaoka, and Kneser \cite{hkk} implies that there exists a universal quadratic form over every number field. However, the question of finding the minimal number of variables needed for a universal form in a given number field still remains open and hard.

Blomer--Kala \cite{BK}, \cite{Ka1} showed that, given a positive number $N$, there exists a real quadratic field such that any universal quadratic form over it must have rank at least $N$, i.e., it must have at least $N$ variables. In this article we are interested in extracting more precise information concerning ranks of universal quadratic forms over a number field $K$ using its Dedekind zeta function $\zeta_K$. Strategies and techniques employing the Dedekind zeta have already been used by Yatsyna \cite{yatsyna2019} as well as by Kala--Yatsyna  \cite{kalayatsyna}.

Before we state our main results let us introduce some notation that will be needed below. First, if $K$ is a number field with ring of integers $\mathcal{O}_K$, then we will denote by $\mathcal{O}_K^{\vee}$ the codifferent of $K$ (see Section \ref{section2} below for any undefined notions). If $d$ is a positive integer, then we define
\begin{equation}
    r_d = \begin{cases}\label{eqr} 
          \left\lfloor \frac{d}{6} \right\rfloor &  \text{if } d \equiv 1 \; (\text{mod }6),  \\
          \left\lfloor \frac{d}{6} \right\rfloor +1  & \text{if } d \not\equiv 1 \; (\text{mod }6).  \\ 
       \end{cases}
\end{equation}

Moreover, if $K$ has degree $d$ and discriminant $\Delta_K$, then we write $$g(\ell, \Delta_K)=\max \left\{ 4, e^{\gamma_0}\frac{\ell^d}{d^d} \Delta_K \log \log \left(\frac{\ell^d}{d^d} \Delta_K\right) + \frac{0.6483\frac{\ell^d}{d^d} \Delta_K}{\log \log (\frac{\ell^d}{d^d}\Delta_K) } \right\},$$
where $\gamma_0$ is the Euler-Mascheroni constant.

In Section \ref{section2}, by adapting and slightly generalizing a technique that was used by Kala--Yatsyna \cite[Proof of Theorem 5.1]{kalayatsyna}, we prove the following theorem.

\begin{theorem}\label{maintheorem}
    Let $K$ be a totally real number field of degree $d$. Assume that $\mathcal{O}^{\vee}_K=(\delta)$ with $\delta$ totally positive and denote by $r_d$ the integer of Equation \eqref{eqr}. If $Q$ is a universal form of rank $R$ over $\mathcal{O}_K$, then 
    $$2\binom{Rd+4r_d-1}{4r_d-1}-1 >  \frac{G(\Delta_K) }{B(d) 2^d} |\Delta_K|^{\frac{3}{2}}\left(\frac{1}{4 \pi}\right)^d ,$$
    where $$G(\Delta_K)=\min \left\{ \frac{1}{g(\ell, \Delta_K )} \: : \: \ell \leq r_d \right\}$$ and $$ B(d)= \begin{cases} 
          \min \{ \frac{1}{b_{\ell}(2d)} \: : \: \ell \leq r_d \text{ and } b_{\ell}(2d) > 0 \}, & \text {if } d \text{ is even}, \\
          \min \{ \frac{1}{-b_{\ell}(2d)} \: : \: \ell \leq r_d \text{ and } b_{\ell}(2d) < 0 \}, & \text{if } d \text{ is odd},
       \end{cases}$$ for some rational numbers $b_1(2d),\dots,b_{\ell}(2d)$ that only depend on $d$.
\end{theorem}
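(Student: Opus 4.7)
The plan is to follow and refine the strategy used by Kala and Yatsyna in the proof of their Theorem 5.1, exploiting the fact that the hypothesis $\mathcal{O}_K^{\vee}=(\delta)$ with $\delta\gg 0$ allows one to transfer from $K$-valued to $\mathbb{Z}$-valued quadratic forms cleanly. Concretely, set $\widetilde{Q}:=\operatorname{Tr}_{K/\mathbb{Q}}(\delta\,Q)$; this is a positive-definite $\mathbb{Z}$-valued quadratic form of rank $Rd$ on the $\mathbb{Z}$-module underlying $\mathcal{O}_K^R$, because $\delta\,Q(v)\in\mathcal{O}_K^{\vee}$ for every $v\in\mathcal{O}_K^R$. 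The universality of $Q$ then forces $\widetilde{Q}$ to represent \emph{every} integer of the form $\operatorname{Tr}(\delta\alpha)$ with $\alpha\in\mathcal{O}_K$ totally positive. The theta series $\Theta_{\widetilde{Q}}$ is therefore a modular form of weight $Rd/2$ whose Fourier coefficients are bounded below by the counting function of the image of the totally positive cone under $\operatorname{Tr}(\delta\,\cdot)$.

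The next step is a lower bound for $\sum_{n\le T}r_{\widetilde{Q}}(n)$. Counting totally positive lattice points $\alpha\in\mathcal{O}_K$ with $\operatorname{Tr}(\delta\alpha)\le T$ inside the positive orthant of $\mathcal{O}_K\otimes\mathbb{R}\cong\mathbb{R}^d$ gives, by a standard Minkowski-type volume computation, an asymptotic of order $T^d/\sqrt{|\Delta_K|}$; this is what ultimately produces part of the $|\Delta_K|^{3/2}$ in the stated inequality once one pairs it with the upper bound coming from modular forms.

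For the upper bound I would decompose $\Theta_{\widetilde{Q}}$ (or an appropriate derivative/product) into its Eisenstein and cuspidal parts. The Eisenstein coefficients are explicit divisor sums whose normalizing factor involves $\zeta_K$ evaluated at negative integers, and via the functional equation for $\zeta_K$ this is what contributes the $(4\pi)^{-d}|\Delta_K|^{3/2}$ and the $g(\ell,\Delta_K)$ ingredient of $G(\Delta_K)$. The rational numbers $b_{\ell}(2d)$ (and hence $B(d)$) should arise when one writes a fixed test function in terms of a basis of $M_{2d}(\operatorname{SL}_2(\mathbb{Z}))$: the dimension of that ambient space is exactly $r_d$ by the standard dimension formula, which explains why $r_d$ controls both $G(\Delta_K)$ and $B(d)$. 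On the cuspidal side, the binomial $\binom{Rd+4r_d-1}{4r_d-1}$ enters as a dimension bound: it counts monomials of degree $4r_d{-}1$ in $Rd$ variables and, after embedding the relevant weight-$Rd/2$ modular-form data into such a polynomial space, one gains an upper bound for Fourier coefficients uniform in $R$.

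The most delicate step, and the one I expect to be the main obstacle, is the third one: setting up the correct modular forms space and carrying out the book-keeping that separates the Eisenstein main term from the cuspidal error in a way that produces exactly the constants $B(d)$, $G(\Delta_K)$, and the binomial factor with the combinatorics of $r_d$. In particular, one must verify that the dimension formula for $M_{2d}(\operatorname{SL}_2(\mathbb{Z}))$ provides the cleanest handle on the structure constants $b_{\ell}(2d)$, and that bounds on Fourier coefficients of cusp forms combine with the Eisenstein lower bound without cancellation. Once these pieces are in place, combining the lower bound from lattice counting with the upper bound from the Eisenstein/cuspidal decomposition and solving for $R$ yields the stated inequality.
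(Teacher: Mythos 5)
Your opening move coincides with the paper's: form $q=\mathrm{Tr}(\delta Q)$, a positive definite $\mathbb{Z}$-form of rank $Rd$ (this is where the hypothesis $\mathcal{O}_K^\vee=(\delta)$, $\delta$ totally positive, is used). After that, however, your plan has genuine gaps. First, the binomial factor $2\binom{Rd+4r_d-1}{4r_d-1}-1$ is not a ``dimension bound'' that you can wave in via monomial counting and cusp-form coefficients: in the actual argument it is the Regev--Stephens-Davidowitz upper bound for the number of vectors of norm at most $r_d$ in a positive definite $\mathbb{Z}$-lattice of rank $Rd$, applied to $q$. You neither cite nor prove any short-vector counting result, so the left-hand side of the inequality is never established. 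Second, and more seriously, your lower bound is structurally wrong. A Minkowski-type volume count of totally positive $\alpha\in\mathcal{O}_K$ with $\mathrm{Tr}(\delta\alpha)\le T$ gives a main term of order $T^d/\sqrt{|\Delta_K|}$, which \emph{decreases} as $|\Delta_K|$ grows, and in any case it is an asymptotic in $T$ that says nothing when $T$ is the fixed small number $r_d\approx d/6$: the region $\{\mathrm{Tr}(\delta x)\le r_d,\ x\succ 0\}$ is far too thin for geometry of numbers to detect lattice points, and no pairing with an upper bound can conjure the growth $|\Delta_K|^{3/2}$ out of it. The engine of the proof is instead Siegel's exact identity $\zeta_K(-1)=2^d\sum_{\ell=1}^{r_d} b_\ell(2d)\, s_\ell^K(2)$, where $s_\ell^K(2)$ sums $\sigma((\gamma)(\mathcal{O}_K^\vee)^{-1})$ over totally positive $\gamma$ in the codifferent with $\mathrm{Tr}(\gamma)=\ell\le r_d$; each such $\sigma$-term is bounded by $g(\ell,\Delta_K)$ using Robin's divisor bound together with AM--GM and $\mathrm{N}(\delta)=1/\Delta_K$, so the number of such $\gamma$ (each of which, by universality, yields a vector of $q$ of value at most $r_d$) is at least $\frac{G(\Delta_K)}{B(d)2^d}|\zeta_K(-1)|$, and the functional equation $\zeta_K(-1)=(-1)^d|\Delta_K|^{3/2}(4\pi)^{-d}\zeta_K(2)$ with $\zeta_K(2)>1$ produces the right-hand side. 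The sign problem you defer (``without cancellation'') is precisely what the parity-of-$d$ case split and the definition of $B(d)$ are for.

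You do correctly sense the modular-forms provenance of $r_d$ and the $b_\ell(2d)$ (they come from expressing a restricted Hilbert Eisenstein series in a basis of $M_{2d}(\mathrm{SL}_2(\mathbb{Z}))$, whose dimension is $r_d$ --- this is the content of the Siegel/Zagier formula the paper quotes as a black box). But your plan attaches the Eisenstein/cuspidal decomposition to $\Theta_{\widetilde Q}$, a form of weight $Rd/2$ whose level and cuspidal contribution depend on $Q$; making that route uniform in $R$ and in the field would require cusp-form coefficient bounds and level bookkeeping that you do not supply, and it is exactly what the paper's argument avoids: no decomposition of $\Theta_{\widetilde Q}$ is needed, only the combinatorial short-vector bound on one side and Siegel's identity plus Robin's bound on the other. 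As written, your proposal identifies some of the right objects but does not contain a proof of either side of the claimed inequality.
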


As a corollary of Theorem \ref{maintheorem}, we obtain the following theorem, which slightly generalizes a theorem of Yatsyna \cite[Theorem 4]{yatsyna2019}.

\begin{theorem}\label{theoremcorollary}
    Let $d$ and $R$ be positive integers. Then there exist only finitely many totally real number fields of degree $d$ with codifferent generated by a totally positive element that have a universal quadratic form of rank $R$ defined over them.
\end{theorem}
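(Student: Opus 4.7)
The plan is to derive Theorem \ref{theoremcorollary} directly from Theorem \ref{maintheorem} by showing that, with $d$ and $R$ fixed, the inequality in that theorem forces $|\Delta_K|$ to be bounded, and then appealing to Hermite's theorem on the finiteness of number fields with bounded discriminant and degree.

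More precisely, fix $d$ and $R$ throughout. The left-hand side of the inequality in Theorem \ref{maintheorem},
\[
2\binom{Rd+4r_d-1}{4r_d-1}-1,
\]
depends only on $R$ and $d$ (through $r_d$), so it is a constant $C = C(R,d)$. On the right-hand side, the factor $B(d)$ depends only on $d$, so it is also a constant, and $(1/4\pi)^d 2^{-d}$ is purely a function of $d$. The only quantities that vary with $K$ (among fields of degree $d$) are $|\Delta_K|^{3/2}$ and $G(\Delta_K)$.

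The key step is therefore an asymptotic analysis of $G(\Delta_K)$. For each fixed $\ell \leq r_d$, the function $g(\ell,\Delta_K)$ is of the order of magnitude
\[
g(\ell,\Delta_K)\ll_{d,\ell} \Delta_K \log\log \Delta_K
\]
as $\Delta_K\to\infty$, since $\ell^d/d^d$ is a constant depending only on $d$ and $\ell$ and the explicit expression defining $g(\ell,\Delta_K)$ is dominated by the term $e^{\gamma_0}(\ell^d/d^d)\Delta_K\log\log((\ell^d/d^d)\Delta_K)$. Taking the minimum over $\ell \leq r_d$ (a finite set depending only on $d$), one obtains
\[
G(\Delta_K) \gg_d \frac{1}{\Delta_K \log\log \Delta_K}
\]
for all sufficiently large $|\Delta_K|$. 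Consequently, the right-hand side of the inequality in Theorem \ref{maintheorem} is bounded below by a quantity of order
\[
\gg_{d,R}\ \frac{|\Delta_K|^{1/2}}{\log\log |\Delta_K|},
\]
which tends to infinity as $|\Delta_K| \to \infty$.

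Hence there exists a constant $M=M(d,R)$ such that the inequality from Theorem \ref{maintheorem} fails whenever $|\Delta_K|>M$; any field $K$ of degree $d$ whose codifferent is generated by a totally positive element and which admits a universal form of rank $R$ must therefore have $|\Delta_K| \leq M$. By Hermite's theorem, there are only finitely many number fields of a given degree with bounded discriminant, which yields the claim. The only mild subtlety is confirming that the bound on $G(\Delta_K)$ is truly uniform in the auxiliary parameter $\ell$, but since $\ell$ ranges over the finite set $\{1,\ldots,r_d\}$ determined by $d$ alone, the implicit constant can be absorbed into a function of $d$, so this step is routine.
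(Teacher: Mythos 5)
Your proposal is correct and follows essentially the same route as the paper: deduce from Theorem \ref{maintheorem} that the right-hand side tends to infinity with $|\Delta_K|$ (so the discriminant is bounded for fixed $d$ and $R$) and then invoke Hermite--Minkowski. The only difference is that you spell out the asymptotics $G(\Delta_K)\gg_d 1/(\Delta_K\log\log\Delta_K)$, hence a lower bound of order $|\Delta_K|^{1/2}/\log\log|\Delta_K|$, where the paper simply asserts the growth of that term.
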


We employ quadratic lattices in the proofs of both of these theorems and, in fact, they hold also for universal quadratic lattices.

We also obtain (in Theorem \ref{theoremliftingproblem}) a result on the lifting problem \cite{kalayatsyna, kalayatsynabulletinofthelms, kimlee}, i.e., on the question of universality  of quadratic forms whose coefficients lie in $\Z$.

One may wonder whether the assumption that $\mathcal{O}^{\vee}_K=(\delta)$ in Theorem \ref{maintheorem} is really needed. Motivated by this consideration, in Section \ref{section3}, we are led to the study of the number of generators for the positive part of ideals in rings of integers of number fields, which may be of independent interest. To be more precise, let $K$ be a totally real number field with ring of integers $\mathcal{O}_K$.
For a subset $J$ of $K$, we denote by $J^+$ the set of all totally positive elements of $J$, and 
$J^{+,0}=J^+ \cup \{ 0\}$. 
One can then ask the following question.

\begin{question}\label{question1} Let $I$ be a fractional ideal in $K$.
What is the smallest number $n$ such that there exist $\alpha_1, \alpha_2,\dots,\alpha_n \in K^+$ with $$I^{+,0}=\alpha_1 \mathcal{O}_K^{+, 0}+\dots+ \alpha_n\mathcal{O}_K^{+, 0}?$$
\end{question}

In the last section, using the notion of $I$-indecomposables, we show that Question \ref{question1} above is well posed in the sense that given any ideal $I$ of $K$,  $I^{+,0}$ is generated by a finite number of totally positive elements. Consequently, given any ideal $I$, we can define $\kappa(I)$ to be the smallest number of generators for $I^{+,0}$ and we can also define  $\kappa(K)=\max_{I} \kappa(I)$ (where the maximum is taken over all the fractional ideals $I$, see Definition \ref{defkappa} below). Using the narrow class group of $K$ we show, in Proposition \ref{propositionkappa}, that $\kappa(K)$ is finite. After that, we focus on the case where $K$ is a real quadratic field and we prove a result relating $\kappa(I)$ to continued fractions.

\section{Siegel's Formula}\label{section2}

In this section, we use Siegel's formula to study number fields that possess universal quadratic forms and we prove Theorem \ref{maintheorem}. Before we proceed let us recall some background material and fix the notation that we will use. 
Let $K$ be a totally real number field with ring of integers $\mathcal{O}_K$; let us denote the norm and trace maps from $K$ to $\Q$ by $\mathrm N$ and $\mathrm{Tr}$.
The codifferent of $K$ is defined by $$\mathcal{O}_K^{\vee}=\{ \alpha \in K : \mathrm{Tr}(\alpha \mathcal{O}_K) \subseteq \mathbb{Z}\}.$$ 
For a subset $J$ of $K$, we denote by $J^+$ the set of all totally positive elements of $J$, and 
$J^{+,0}=J^+ \cup \{ 0\}$. 

We will use the language of lattices so that we can work with quadratic forms in a coordinate-free way. We refer the reader to \cite{omearabook} for general background material on lattices and quadratic forms. An $r$-ary quadratic $K$-space $V$ is an $r$-dimensional vector space $V$ over $K$ equipped with a symmetric bilinear form $B: V \times V \rightarrow K$. The associated quadratic form $Q : V \rightarrow K$ is given by $Q(x)=B(x,x)$ for every $x \in V.$ The Gram matrix of vectors $v_1,\dots,v_n \in V$ is the matrix $(B(v_i,v_j)) \in M_{n\times n}(K)$.

Let $V$ be an $r$-ary quadratic space over $K$. A quadratic $\mathcal{O}_K$-lattice $L$ of rank $r$ is a finitely generated $\mathcal{O}_K$-submodule of $V$ such that $KL=V$ equipped with the restrictions of $B$ and $Q$ to $L$. The rank of $L$ is $r$ and is denoted by $\text{rank}(L)$. A quadratic $\mathcal{O}_K$-lattice $L$ is called (totally) positive definite if $Q(x) \in \mathcal{O}^+_K$ for every $0\neq x \in L$. 
By a $\Z$-form we mean a positive definite quadratic form with $\Z$-coefficients, naturally considered as a quadratic form on the $\mathcal{O}_K$-lattice $\mathcal{O}_K^r$.

Finally, a positive definite lattice $L$ is called universal if every element of $\mathcal{O}_K^+$ is represented by an element of $L$, i.e., for every $w \in \mathcal{O}_K^+$ there exists $x \in L$ such that $Q(x)=w$.

We will need the following upper bound on the number of short vectors in a given $\Z$-lattice, due to Regev--Stephens-Davidowitz.

\begin{theorem}\label{theoremcin}\cite[Theorem 1.1]{rsd}
    Let $L$ be a positive definite $\mathbb{Z}$-lattice of rank $R$. If $i$ is a positive integer, then we denote by $N_{\leq i}(L)$ the number of vectors of norm $\leq i$ in $L$. Then $$N_{\leq i}(L) \leq C(R,i)=2\binom{R+4i-1}{4i-1}-1.$$
\end{theorem}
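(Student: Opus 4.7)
The approach I would take is combinatorial: encode short vectors (up to sign) as multisets of bounded size. First I would separate off the trivial contributions to the count. The zero vector contributes $1$, and every nonzero short vector comes paired with its negative, so $N_{\leq i}(L) = 1 + 2M$, where $M$ is the number of unordered pairs $\{v,-v\}$ with $0 < |v|^2 \leq i$. The claim then reduces to the purely combinatorial inequality $M \leq \binom{R + 4i - 1}{4i - 1} - 1$.

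Next I would exploit the integrality of $L$. Since $L$ is positive-definite and integral, every inner product $\langle u,v\rangle$ is an integer, and Cauchy--Schwarz gives $|\langle u,v\rangle| \leq \sqrt{|u|^2|v|^2} \leq i$ for short $u,v$. Via the identity $|v \pm w|^2 = |v|^2 \pm 2\langle v,w\rangle + |w|^2$, sums and differences of short vectors are therefore themselves vectors of squared norm at most $4i$. This is the geometric source of the factor $4i$ appearing in the binomial, and it is the key feature distinguishing the bound from a crude $(2i+1)^R$ pigeonhole that one would get from recording individual integer inner products alone.

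The heart of the plan is then to construct an injection from pairs $\{v,-v\}$ into an explicit combinatorial set of size $\binom{R+4i-1}{4i-1}$. Since $\binom{R+4i-1}{4i-1}=\binom{R+4i-1}{R}$ is precisely the number of multisets of size $R$ drawn from a $4i$-element alphabet (stars and bars), I would attempt to label each pair by such a multiset: fix, for instance, a Minkowski- or LLL-reduced basis $b_1,\dots,b_R$ of $L$, and for each short $v$ read off a length-$R$ sequence encoding its interactions with the basis, with the alphabet restricted to a range of size $4i$ by the norm bounds together with the parallelogram identity above. Positive-definiteness of the simultaneous Gram matrix of all short vectors would then have to force distinct pairs $\{v,-v\}$ to produce distinct labels, and the ``$-1$'' in the bound should correspond to a single forbidden label that cannot be realized (e.g.\ the ``all zeros'' multiset, reserved for the zero vector).

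The main obstacle is passing from the naive exponential $(2i+1)^R$ bound to the sharp polynomial-in-$R$ bound of degree $4i-1$. This requires leveraging the full rigidity of the positive-definite integral structure, rather than merely the integrality of inner products: one must rule out the vast majority of a priori possible multiset labels by using the positive semidefiniteness of the full Gram matrix of short vectors to restrict the realizable sign/magnitude patterns. Verifying that the stars-and-bars count is truly attained by a well-chosen encoding, and extracting exactly the constant $4i$ (rather than, say, $2i+1$), is where I expect the delicate combinatorial heart of the proof to lie.
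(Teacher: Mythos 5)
There is a genuine gap here, and it is worth noting first that the paper itself does not prove this statement: it imports \cite[Theorem 1.1]{rsd} as a black box, and the only original content is the two-line remark reducing the non-classical case to the classical one by passing from $(L,Q)$ to $(L,2Q)$ --- which is in fact where the ``$4i$'' comes from: vectors with $Q(v)\leq i$ become vectors of norm $\leq 2i$ in the classical lattice $(L,2Q)$, and the classical bound with norm parameter $k=2i$ involves $2k-1=4i-1$. So your proposal is an attempt to reprove the Regev--Stephens-Davidowitz theorem from scratch, and it stops exactly where that theorem begins. Your preliminary reductions are fine: $N_{\leq i}(L)=1+2M$ with $M$ the number of pairs $\{v,-v\}$, the target $M\leq \binom{R+4i-1}{4i-1}-1$, and the observation that sums and differences of short vectors have norm at most $4i$. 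But the ``heart of the plan'' --- the injection from pairs into multisets, with injectivity ``forced'' by positive-definiteness --- is never constructed, and you concede as much in your final paragraph. A plan whose decisive step is flagged as the expected location of the difficulty is not a proof.

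Concretely, the proposed encoding would fail as described. Labeling a short vector by its ``interactions with a reduced basis'' cannot yield an alphabet of size $4i$: Cauchy--Schwarz only gives $|\langle v,b_j\rangle|\leq \sqrt{i}\,|b_j|$, and reduced basis vectors have lengths growing with $\det L$, unboundedly in terms of $i$ and $R$; the $4i$-window phenomenon you correctly identified concerns inner products among the short vectors themselves, which is unordered pairwise Gram data, not a length-$R$ coordinate string. Moreover, even granting a bounded alphabet, an ordered label string reproduces exactly the exponential count you were trying to beat, while passing to multisets of size $R$ from $4i$ letters (which is what $\binom{R+4i-1}{4i-1}$ counts under your stars-and-bars reading) collapses the labeling so drastically that there is no evident reason distinct pairs get distinct labels --- supplying that reason \emph{is} the theorem. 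It is telling that $\binom{R+4i-1}{4i-1}$ has a second interpretation as the dimension of the space of polynomials of degree at most $4i-1$ in $R$ variables, and it is under this reading that the argument of \cite{rsd} proceeds: one bounds the number of pairs $\{v,-v\}$ by the dimension of a function space via a linear-independence argument, not by an explicit combinatorial injection. Finally, a smaller issue: your step ``every inner product $\langle u,v\rangle$ is an integer'' invokes the classical assumption, whereas the statement in the paper only assumes $Q$ is integer-valued (so merely $2B(u,v)\in\mathbb{Z}$); the doubling trick in the paper's remark repairs this, and any completed version of your argument would need it as well.
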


Note that \cite[Theorem 1.1]{rsd} concerns only classical quadratic lattices, whereas here we state the result without the classical assumption. The transition is easy: If $(L,Q)$ is non-classical, then $(L, 2Q)$ is classical, and vectors with $Q(v)\leq i$ correspond to vectors with $2Q(v)\leq 2i$, giving the version of the theorem formulated above.
Note also that the bound for vectors of norm 2 (in classical lattices) can be improved, see, e.g., \cite[Theorem 3.1]{kalayatsynaZmija} or \cite[Section 4]{rsd}.

Let now $K$ be a totally real number field of degree $d>1$. We will work with its Dedekind zeta function $\zeta_K(s)$, defined as the meromorphic function that is for $s\in\mathbb C, \mathrm{Re}(s)>1,$ given by
$$\zeta_K(s)=\sum_{I<\mathcal O_K} \frac 1 {\mathrm N(I)^s},$$
where the sum runs over all the non-zero integral ideals $I$ in $\mathcal O_K$, and $\mathrm N(I)$ denotes the norm of an ideal $I$. As is well-known, $\zeta_K$ satisfies a functional equation, from which we will need that
\begin{equation}\label{eq:zeta FE}
    \zeta_K(-1)=(-1)^d |\Delta_K|^{\frac{3}{2}}\left(\frac{1}{4 \pi}\right)^d \zeta_K(2).
\end{equation}

Further, for any (integral) ideal $I$ of $\mathcal{O}_K$ we define $$\sigma (I)= \sum_{J \mid I} \mathrm{N}(J).$$ Moreover, we define
\begin{equation}\label{equationsk}
    s_{\ell}^K(2)=\sum_{\gamma \in \mathcal{O}^{\vee,+}_K, \;  \mathrm{Tr}(\gamma)=\ell } \sigma ((\gamma) (\mathcal{O}^{\vee}_K)^{-1}).
\end{equation}
Then we have the following theorem due to Siegel. 

\begin{theorem}\label{theoremzetafuntion}\cite[Page 59]{zagierzetafunction}
Let $K$ be a totally real number field of degree $d>1$. %Let $h=2dm$, with $m$ a positive integer. 
Then 
$$\zeta_K(-1)=2^d \sum_{\ell=1}^{r_d} b_\ell(2d) s_{\ell}^K(2),$$
where $b_1(2d),\dots,b_r(2d)$ are rational numbers that only depend on $d$, and $r_d, s_{\ell}^K(2)$ are given in Equations \eqref{eqr} and \eqref{equationsk}.

\end{theorem}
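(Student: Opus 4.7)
The plan is to follow Siegel's original approach, most cleanly presented by Zagier, built around a Hilbert modular Eisenstein series and its restriction to the diagonal. First I would take the parallel weight $2$ Hilbert Eisenstein series $E_2(z_1,\dots,z_d)$ attached to $K$, where $(z_1,\dots,z_d)$ lies in the $d$-fold upper half plane. Its Fourier expansion, computed by unfolding the defining double sum over pairs in $\mathcal O_K^2$ modulo units, has the shape
$$E_2(z_1,\dots,z_d) = c_0\,\zeta_K(-1) + c_1 \sum_{\gamma \in \mathcal{O}_K^{\vee,+}} \sigma\bigl((\gamma)(\mathcal{O}_K^{\vee})^{-1}\bigr)\, e^{2\pi i\, \mathrm{Tr}(\gamma z)},$$
where $\mathrm{Tr}(\gamma z)=\sum_j \gamma^{(j)} z_j$ is read through the real embeddings of $K$, and $c_0,c_1$ are explicit normalizing constants. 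The codifferent appears naturally because $\mathrm{Tr}(\gamma z)$ must be invariant under $z_j\mapsto z_j + \mathcal{O}_K$, and the $\sigma$-coefficients are a standard output of the unfolding together with M\"obius inversion.

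Next I would specialize to the diagonal $z_1=\cdots=z_d=\tau$ in the upper half plane. Since $E_2$ is Hilbert-modular of parallel weight $2$ for $SL_2(\mathcal{O}_K)$ and $SL_2(\Z)$ sits diagonally inside $SL_2(\mathcal{O}_K)$, the restriction $f(\tau):=E_2(\tau,\dots,\tau)$ is a classical holomorphic modular form of weight $2d$ on $SL_2(\Z)$. Grouping the Fourier expansion by the value $\ell=\mathrm{Tr}(\gamma)$ then yields
$$f(\tau) = c_0\,\zeta_K(-1) + c_1 \sum_{\ell \geq 1} s_\ell^K(2)\, q^\ell, \qquad q=e^{2\pi i\tau},$$
so that the $\ell$-th nonzero Fourier coefficient of $f$ is precisely $c_1\, s_\ell^K(2)$, with $s_\ell^K(2)$ as defined in Equation \eqref{equationsk}.

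The third step is to exploit the finite-dimensionality of $M_{2d}(SL_2(\Z))$: the integer $r_d$ of Equation \eqref{eqr} is exactly $\dim M_{2d}(SL_2(\Z))$, so the linear map sending a weight $2d$ modular form to the tuple of its first $r_d$ nonzero Fourier coefficients is injective. Consequently the constant term of any form in $M_{2d}(SL_2(\Z))$ is a $\Q$-linear combination of its first $r_d$ nonzero Fourier coefficients, with rational coefficients depending only on $d$. Applying this identity to $f$ and absorbing $c_0,c_1$ into an overall factor that one pins down to be $2^d$ produces
$$\zeta_K(-1) = 2^d \sum_{\ell=1}^{r_d} b_\ell(2d)\, s_\ell^K(2),$$
as claimed. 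The numbers $b_\ell(2d)$ can in principle be computed explicitly by inverting the Vandermonde-type matrix recording the first $r_d$ Fourier coefficients of a rational basis of $M_{2d}(SL_2(\Z))$ constructed from $E_4$, $E_6$, and $\Delta$.

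The main obstacle is really the bookkeeping needed to turn the pair $(c_0,c_1)$ into the exact prefactor $2^d$: this amounts to carefully comparing the Hilbert constant term $c_0\,\zeta_K(-1)$ with the classical normalization of the weight $2d$ Eisenstein coefficient at infinity, and combining it with the factor arising from the diagonal pullback. A secondary but standard point is to confirm $\dim M_{2d}(SL_2(\Z))=r_d$ case by case in the six residue classes modulo $6$, which agrees with the piecewise definition in Equation \eqref{eqr}, and to check that the diagonal restriction is modular for the full group $SL_2(\Z)$ rather than only a congruence subgroup (this follows from the diagonal embedding together with parallel-weight transformation behavior). Once these calibrations are in place, the identity is exactly Zagier's classical trick.
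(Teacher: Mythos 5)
The paper does not prove this statement at all: it is quoted directly from Zagier's presentation of Siegel's formula (the citation to page 59), so the only meaningful comparison is with that classical argument, and your outline does follow it — the parallel weight $2$ Hilbert Eisenstein series attached to $K$ (holomorphic precisely because $d>1$, via Hecke's trick), whose constant term is $2^{-d}\zeta_K(-1)$ and whose $\gamma$-th coefficient is $\sigma\bigl((\gamma)(\mathcal{O}_K^{\vee})^{-1}\bigr)$, restricted to the diagonal to give a form of weight $2d$ on $SL_2(\Z)$ whose $q^{\ell}$-coefficient is $s_{\ell}^K(2)$; the factor $2^d$ indeed comes from this constant-term normalization rather than from the pullback itself, and $r_d=\dim M_{2d}(SL_2(\Z))$ is the correct case check.

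There is, however, a genuine gap at your third step, which is where the actual content of Siegel's lemma sits. You assert that because $\dim M_{2d}(SL_2(\Z))=r_d$, the map sending $f=\sum a_n q^n$ to $(a_1,\dots,a_{r_d})$ is injective. A dimension count gives no such thing: equality of dimensions does not make a linear map injective, and what the valence formula yields directly is injectivity of $f\mapsto(a_0,a_1,\dots,a_{r_d-1})$, i.e.\ the constant term together with the first $r_d-1$ positive coefficients. The statement you need — that no nonzero form of weight $2d$ has $a_1=\dots=a_{r_d}=0$, equivalently that $a_0$ is a $\Q$-linear combination of $a_1,\dots,a_{r_d}$ — is exactly what Siegel proves, and it requires a construction, not bookkeeping: one builds an explicit meromorphic modular form of weight $2-2d$ type (concretely, a weight $2$ meromorphic form $f\cdot h$ with $h$ a ratio of $E_4$, $E_6$, $\Delta$ chosen holomorphic away from the cusp, with principal part beginning at $q^{-r_d}$ with unit leading coefficient), and the residue theorem for the resulting weight $2$ object forces the linear relation with a nonzero coefficient on $a_0$; the $b_\ell(2d)$ are the Fourier coefficients of this auxiliary form, which is also how their rationality is obtained. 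Without this nonvanishing argument the chain ``finite dimensionality $\Rightarrow$ injectivity on $(a_1,\dots,a_{r_d})$ $\Rightarrow$ $a_0$ determined'' does not close, so as written the proposal assumes the crux of the theorem it sets out to prove. The remaining calibration issues you flag (pinning down $2^d$, checking the six residue classes mod $6$, modularity of the diagonal restriction for the full group) are indeed routine.
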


Before we proceed to the proof of Theorem \ref{maintheorem}, we need to provide bounds for the terms $\sigma ((\gamma) (\mathcal{O}^{\vee}_K)^{-1})$ involved in Equation $($\ref{equationsk}$)$. We do this in the following lemma.

\begin{lemma}\label{lemmasigma}
    Let $K$ be a totally real number field of degree $d>1$. Assume that $\mathcal{O}^{\vee}_K=(\delta)$ with $\delta$ totally positive. Let $\ell$ be a positive integer and let $\gamma \in \mathcal{O}^{\vee, +}_K$ such that $\mathrm{Tr}(\gamma)=\ell$. Then $$\sigma ((\gamma) (\mathcal{O}^{\vee}_K)^{-1}) \leq g(\ell, \Delta_K).$$
\end{lemma}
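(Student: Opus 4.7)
The plan is to rewrite $(\gamma)(\mathcal{O}_K^\vee)^{-1}$ as a principal integral ideal with a totally positive generator, bound its norm via the AM--GM inequality, and then apply Robin's explicit upper bound on the classical divisor-sum function.

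Set $\alpha := \gamma/\delta$. Since $\gamma \in \mathcal{O}_K^\vee = (\delta)$ we have $\alpha \in \mathcal{O}_K$, and since both $\gamma$ and $\delta$ are totally positive, so is $\alpha$. Hence $\mathfrak{a} := (\gamma)(\mathcal{O}_K^\vee)^{-1} = (\alpha)$ is an integral principal ideal. Applying AM--GM to the $d$ positive reals $\sigma_1(\gamma),\dots,\sigma_d(\gamma)$ (where $\sigma_i$ denote the real embeddings of $K$) gives
\[
\mathrm{N}(\gamma) = \prod_{i=1}^d \sigma_i(\gamma) \leq \left(\frac{\mathrm{Tr}(\gamma)}{d}\right)^d = \frac{\ell^d}{d^d}.
\]
Since the different $(\mathcal{O}_K^\vee)^{-1}$ has norm $|\Delta_K|$, the codifferent has norm $1/|\Delta_K|$, and hence $\mathrm{N}(\delta) = 1/|\Delta_K|$. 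Therefore
\[
\mathrm{N}(\mathfrak{a}) = \mathrm{N}(\alpha) = \frac{\mathrm{N}(\gamma)}{\mathrm{N}(\delta)} \leq \frac{\ell^d |\Delta_K|}{d^d} =: M.
\]

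Next I would bound the ideal divisor sum $\sigma(\mathfrak{a}) = \sum_{J \mid \mathfrak{a}} \mathrm{N}(J)$ by the classical integer divisor sum $\sigma_{\mathbb{Z}}(\mathrm{N}(\mathfrak{a}))$, and then apply Robin's inequality
\[
\sigma_{\mathbb{Z}}(n) \leq e^{\gamma_0}\, n \log\log n + \frac{0.6483\, n}{\log\log n} \qquad (n \geq 3),
\]
together with the trivial $\sigma_{\mathbb{Z}}(n) \leq 3 < 4$ for $n \in \{1,2\}$. Monotonicity of Robin's bound in $n$ (beyond a small threshold absorbed by the $\max$ with $4$ in the definition of $g$), combined with $\mathrm{N}(\mathfrak{a}) \leq M$, would then yield $\sigma(\mathfrak{a}) \leq g(\ell, \Delta_K)$.

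The main obstacle will be the comparison $\sigma(\mathfrak{a}) \leq \sigma_{\mathbb{Z}}(\mathrm{N}(\mathfrak{a}))$. The naive local-factor inequality can fail when several prime ideals above a common rational prime divide $\mathfrak{a}$, even under the hypothesis of the lemma (for instance, $\mathfrak{a} = (7)$ in $\mathbb{Q}(\sqrt{2})$ gives $\sigma(\mathfrak{a}) = 64 > 57 = \sigma_{\mathbb{Z}}(49)$). The remedy is to exploit the slack provided by the factor $|\Delta_K|$ in $M$ and instead bound $\sigma(\mathfrak{a})$ against $\sigma_{\mathbb{Z}}(M)$ directly via a prime-by-prime analysis of local Euler factors, as in the analogous argument of \cite[Proof of Theorem 5.1]{kalayatsyna}.
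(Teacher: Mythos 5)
Your first half coincides exactly with the paper's argument: writing $\alpha=\gamma/\delta\in\mathcal{O}_K^{+}$, so that $(\gamma)(\mathcal{O}_K^{\vee})^{-1}=(\alpha)$ is integral, bounding $\mathrm{N}(\gamma)\le \ell^d/d^d$ by AM--GM, and using $\mathrm{N}(\delta)=1/\Delta_K$ to get $\mathrm{N}(\alpha)\le \ell^d\Delta_K/d^d$. But after that you only announce the decisive step. The entire content of the lemma is the passage from the ideal divisor sum $\sigma((\alpha))=\sum_{J\mid\alpha\mathcal{O}_K}\mathrm{N}(J)$ to the classical divisor function $\sigma'$ evaluated at something bounded by $\ell^d\Delta_K/d^d$, and your ``remedy'' (exploit the slack coming from the factor $\Delta_K$ and compare Euler factors prime by prime, ``as in Kala--Yatsyna, Theorem 5.1'') is not carried out. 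The citation does not supply it either: in that proof the relevant ideal is $\mathcal{O}_K$ itself, so $\sigma(\mathcal{O}_K)=1$ and no such comparison is made there. Nor is it clear that the slack suffices without real work: when many small rational primes split completely and divide $(\alpha)$, the ratio $\sigma((\alpha))/\mathrm{N}(\alpha)$ grows like a power of $\log\log\mathrm{N}(\alpha)$ governed by the splitting behaviour, so landing under $g(\ell,\Delta_K)$ requires an actual estimate, not just the inequality $\mathrm{N}(\alpha)\le \ell^d\Delta_K/d^d$. As written, the proposal therefore has a genuine gap at the one step where the lemma is proved or lost.

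That said, your diagnostic remark is correct and pertinent when set against the paper's own proof. The inequality $\sigma((\alpha))\le\sigma'(\mathrm{N}(\alpha))$ does fail in general: in $K=\mathbb{Q}(\sqrt{2})$ (whose codifferent has the totally positive generator $(2+\sqrt{2})/4$, so the hypotheses of the lemma are met, with $\gamma=7\delta$ of trace $\ell=7$) one has $\sigma((7))=(1+7)^2=64>57=\sigma'(49)$, since $7$ splits. This is precisely the step the paper asserts, via $\sum_{J\mid\alpha\mathcal{O}_K}\mathrm{N}(J)\le\sum_{t\mid\mathrm{N}(\alpha)}t$, justified only by the remark that each $\mathrm{N}(J)$ divides $\mathrm{N}(\alpha\mathcal{O}_K)$; the justification is insufficient because distinct ideal divisors can share the same norm, so the left-hand sum counts divisors with multiplicity. (In the example above the final bound $64\le g(7,8)$ still holds comfortably, so this refutes the intermediate step, not the statement.) In short, you located the true difficulty correctly, but neither your sketch nor the paper's one-line justification closes it; a proof would need a genuine comparison, e.g.\ a Robin-type bound for the ideal divisor function of a degree-$d$ field, or an Euler-factor-by-Euler-factor estimate of $\sigma((\alpha))$ against $\sigma'$ of a suitable multiple of $\mathrm{N}(\alpha)$ that absorbs split primes.
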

\begin{proof} 
    Since $\delta$ is totally positive, we find that $\mathcal{O}^{\vee, +}_K=\delta\mathcal{O}^{+}_K$. As $\gamma \in \mathcal{O}^{\vee, +}_K$, there exists $\alpha \in \mathcal{O}_K^+$ such that $\gamma= \delta \alpha$. Therefore,
    $$\sigma ((\gamma) (\mathcal{O}^{\vee}_K)^{-1})=\sigma ((\alpha)(\delta) (\mathcal{O}^{\vee}_K)^{-1})=\sigma ((\alpha) \mathcal{O}^{\vee}_K (\mathcal{O}^{\vee}_K)^{-1})=\sigma (\alpha \mathcal{O}_K)=\sum_{J \mid \alpha \mathcal{O}_K } \mathrm{N} (J).$$ 

    However, if $J$ is an ideal of $\mathcal{O}_K$ with $J \mid \alpha \mathcal{O}_K$, then $N(J)$ divides $ N(\alpha \mathcal{O}_K)$. Consequently, 
    $$\sum_{J \mid \alpha \mathcal{O}_K } \mathrm{N} (J)=\sum_{N(J) \mid N(\alpha \mathcal{O}_K) } \mathrm{N} (J) \leq \sum_{d \mid N(\alpha \mathcal{O}_K)} d= \sigma'(N(\alpha \mathcal{O}_K))=\sigma'(\mathrm{N}(\alpha)),$$ 
    where $\sigma'$ is the classical divisor function.  It is a theorem of Robin \cite[Théorème 2]{robinbound} that for $n \geq 3$ we have that $$\sigma'(n)<e^{\gamma_0}n \log \log (n) + \frac{0.6483n}{\log \log (n) },$$ where $\gamma_0$ is the Euler–Mascheroni constant. Using this we find $$\sigma ((\gamma) (\mathcal{O}^{\vee}_K)^{-1}) \leq e^{\gamma_0}\mathrm{N}(\alpha) \log \log (\mathrm{N}(\alpha)) + \frac{0.6483\mathrm{N}(\alpha)}{\log \log (\mathrm{N}(\alpha)) }.$$

    On the other hand, a simple application of the Arithmetic--Geometric means inequality gives 

    $$\ell =\mathrm{Tr}(\gamma) \geq d (\mathrm{N}(\gamma))^{\frac{1}{d}}=d (\mathrm{N}(\delta))^{\frac{1}{d}} (\mathrm{N}(\alpha))^{\frac{1}{d}}.$$
    Therefore, $$\mathrm{N}(\alpha) \leq \frac{\ell^d}{d^d} \frac{1}{\mathrm{N}(\delta)}.$$

    Since the different ideal $(\mathcal{O}_K^{\vee})^{-1}$ has norm equal to the discriminant, and $\frac{1}{\delta}$ is a generator for $(\mathcal{O}_K^{\vee})^{-1}$, we find that $\mathrm{N}(\delta)=\frac{1}{\Delta_K}$. Finally, since $$ e^{\gamma_0}\mathrm{N}(\alpha) \log \log (\mathrm{N}(\alpha)) + \frac{0.6483\mathrm{N}(\alpha)}{\log \log (\mathrm{N}(\alpha)) } \leq  e^{\gamma_0}\frac{\ell^d}{d^d} \Delta_K \log \log (\frac{\ell^d}{d^d} \Delta_K) + \frac{0.6483\frac{\ell^d}{d^d} \Delta_K}{\log \log (\frac{\ell^d}{d^d}\Delta_K) }$$ and we have that $\sigma'(2)=3$, $\sigma'(3)=4$, we find that $$\sigma ((\gamma) (\mathcal{O}^{\vee}_K)^{-1}) \leq g(\ell, \Delta_K).\qedhere$$     
\end{proof}

\begin{proof}[\it Proof of Theorem $\ref{maintheorem}$]
    Fix an integral basis $\omega_1, \omega_2,\dots, \omega_d$ for $\mathcal{O}_K$. Consider the quadratic form $q$ given by $$q(x_{11},x_{12},\dots,x_{Rd})=\mathrm{Tr}(\delta Q(x_{11}\omega_1+\dots+x_{1d}\omega_d,\dots,x_{R1}\omega_1+\dots+x_{Rd}\omega_d)),$$ for $x_{ij} \in \mathbb{Z}$. Since $\delta \in \mathcal{O}_K^{\vee,+}$ and $Q$ is positive definite, we see that $q$ is a positive definite quadratic form over $\mathbb{Z}$. By construction, we have that $q$ has rank $Rd$. 
    %Moreover, by replacing $\delta$ by $2\delta$ if $Q$ is not classical, we can also assume that $q$ is classical. Thus $(\mathbb{Z}^{Rd}, q)$ defines a classical positive definite $\mathbb{Z}$-lattice.

    For every positive integer $n$, let $N_{\leq i}(q)$ be the number of vectors $v \in \mathbb{Z}^{Rd}$ such that $q(v)\leq i$. It follows from Theorem \ref{theoremcin} that $N_{\leq i}(q)\leq  C(Rd,i)$. Therefore, we have the following inequality
   $$2\binom{Rd+4r_d-1}{4r_d-1}-1=C(Rd,r_d) \geq N_{\leq r_d}(q) = \# \{ w \in \mathbb{Z}^{Rd} \: : \: q(w) \leq r_d \}.$$

     Now, if $\gamma \in \mathcal{O}_K^{\vee,+}$, then $\gamma=\delta \alpha$ for some $\alpha \in \mathcal{O}^+_K$. Using the assumption that $Q$ is universal over $\mathcal{O}_K$, we find that there exists $(w_{11},\dots,w_{Rd}) \in \mathbb{Z}^{Rd}$ such that $$Q(w_{11} \omega_1+\dots+w_{1d}\omega_d,\dots,w_{R1}\omega_1+\dots+w_{Rd} \omega_d)=\alpha.$$ 
     This implies that $$\# \{ w \in \mathbb{Z}^{Rd} \: : \: q(w) \leq r_d \} \geq \# \{ \gamma \in \mathcal{O}^+_K \: : \: \mathrm{Tr}(\gamma) \leq r_d \} .$$ 
     %Here we really need $2r_d$ at the set of the left-hand side in the above inequality. Indeed, if $\gamma= \delta \alpha \in \mathcal{O}_K^{\vee,+}$ with $\alpha \in \mathcal{O}_K^+$ and $\mathrm{Tr}(\gamma) \leq r_d$, then, in the case where $Q$ is non-classical, we will have that $\mathrm{Tr}(2\delta \alpha)=q(w) \leq 2r_d$, with $\alpha=Q(w)$ for some $w \in \mathbb{Z}^{Rd}$.

     On the other hand, Lemma \ref{lemmasigma} implies that for every $\gamma \in \mathcal{O}_K^{\vee,+}$ with $\mathrm{Tr}(\gamma)=\ell$ we have the inequality $$1 \geq \frac{\sigma ((\gamma) (\mathcal{O}^{\vee}_K)^{-1})}{g(\ell, \Delta_K)}.$$ 
     
     Therefore, $$\# \{ \gamma \in \mathcal{O}^+_K \: : \: \mathrm{Tr}(\gamma) \leq r_d \} = \sum_{\ell=1}^{r_d} \sum_{\gamma \in \mathcal{O}^{\vee,+}_K, \;  \mathrm{Tr}(\gamma)=\ell } 1 \geq \sum_{\ell=1}^{r_d} \sum_{\gamma \in \mathcal{O}^{\vee,+}_K, \;  \mathrm{Tr}(\gamma)=\ell } \frac{\sigma ((\gamma) (\mathcal{O}^{\vee}_K)^{-1})}{g(\ell, \Delta_K)}.$$
    Let $$G(\Delta_K)=\min \left\{ \frac{1}{g(\ell, \Delta_K )} \: : \: \ell \leq r_d \right\} .$$  

    Assume first that $d$ is even. Recall that in this case, we set
    $$ B(d)=\min \left\{ \frac{1}{b_{\ell}(2d)} \: : \: \ell \leq r_d \text{ and } b_{\ell}(2d) > 0 \right\}.$$ 
    Since the sum $$\sum_{\gamma \in \mathcal{O}^{\vee,+}_K, \;  \mathrm{Tr}(\gamma)=\ell } \sigma ((\gamma) (\mathcal{O}^{\vee}_K)^{-1}) $$ is non-negative, we find the following inequality
    $$ \sum_{\ell=1}^{r_d} \sum_{\gamma \in \mathcal{O}^{\vee,+}_K, \;  \mathrm{Tr}(\gamma)=\ell } \frac{\sigma ((\gamma) (\mathcal{O}^{\vee}_K)^{-1})}{g(\ell, \Delta_K)} \geq \frac{G(\Delta_K)}{B(d)} \sum_{\ell=1}^{r_d} b_{\ell}(2d)  \sum_{\gamma \in \mathcal{O}^{\vee,+}_K, \;  \mathrm{Tr}(\gamma)=\ell } \sigma ((\gamma) (\mathcal{O}^{\vee}_K)^{-1}).$$ 
    Moreover, using Theorem \ref{theoremzetafuntion}, we see that the right-hand side of the above inequality is equal to 
    $$ \frac{G(\Delta_K)}{B(d) 2^d} 2^d \sum_{\ell=1}^{r_d} b_\ell(2d) s_{\ell}^K(2)=  \frac{G(\Delta_K) }{B(d) 2^d} \zeta_K(-1)\overset{\eqref{eq:zeta FE}}{=}
    (-1)^d \frac{G(\Delta_K) }{B(d) 2^d}|\Delta_K|^{\frac{3}{2}}\left(\frac{1}{4 \pi}\right)^d \zeta_K(2).$$ 
    As $\zeta_K(2) >1,$ by combining the inequalities obtained above, we see that our theorem is proved in the case of even $d$.
    
    Assume now that $d$ is odd.  Recall that in this case, we set
    $$ B(d)=\min \left\{ \frac{1}{-b_{\ell}(2d)} \: : \: \ell \leq r_d \text{ and } b_{\ell}(2d) < 0 \right\}.$$
     Since the sum $$\sum_{\gamma \in \mathcal{O}^{\vee,+}_K, \;  \mathrm{Tr}(\gamma)=\ell } \sigma ((\gamma) (\mathcal{O}^{\vee}_K)^{-1}) $$ is non-negative, we find the following inequality
    $$ \sum_{\ell=1}^{r_d} \sum_{\gamma \in \mathcal{O}^{\vee,+}_K, \;  \mathrm{Tr}(\gamma)=\ell } \frac{\sigma ((\gamma) (\mathcal{O}^{\vee}_K)^{-1})}{g(\ell, \Delta_K)} \geq -\frac{G(\Delta_K)}{B(d)} \sum_{\ell=1}^{r_d} b_{\ell}(2d)  \sum_{\gamma \in \mathcal{O}^{\vee,+}_K, \;  \mathrm{Tr}(\gamma)=\ell } \sigma ((\gamma) (\mathcal{O}^{\vee}_K)^{-1}).$$ 
    Therefore, using a similar argument as in the case where $d$ is even we find that the right-hand side of the above inequality is equal to $$ \frac{G(\Delta_K) }{B(d) 2^d} (-\zeta_K(-1))=\frac{G(\Delta_K) }{B(d) 2^d}|\Delta_K|^{\frac{3}{2}}\left(\frac{1}{4 \pi}\right)^d \zeta_K(2).$$ 
    This completes the proof of our theorem because we again have  $\zeta_K(2) >1$. 
\end{proof}

\begin{proof}[Proof of Theorem $\ref{theoremcorollary}$]
    The term $$\frac{G(\Delta_K) }{B(d) 2^d} |\Delta_K|^{\frac{3}{2}}\left(\frac{1}{4 \pi}\right)^d$$ grows to infinity as $|\Delta_K|$ grows to infinity (while everything else is fixed). Therefore, it follows from Theorem \ref{maintheorem} that given any fixed $R$ and $d$, there exist only a finite number of possibilities for the discriminant $\Delta_K$ of number fields of degree $d$ with codifferent generated by a totally positive element that have a universal quadratic form of rank $R$ defined over them. Consequently, by the Hermite--Minkowski theorem there only finitely many possibilities for such number fields. 
\end{proof}

We end this section by recording the following theorem, which is a more general version of a part of a theorem of Kala--Yatsyna \cite[Theorem 5.1]{kalayatsyna}.

\begin{theorem}\label{theoremliftingproblem}
    Let $K$ be a totally real number field of degree $d \leq 43$. Assume that $\mathcal{O}^{\vee}_K$ is principal and that $\min_{\gamma \in \mathcal{O}_K^{\vee,+}} \mathrm{Tr}(\gamma)=r_d$, where $r_d$ is given by \eqref{eqr}. If there is a universal $\mathbb{Z}$-form over $K$, then 
    $$|\Delta_K|<|b_{r_d}(2d)(4\pi^2)^d d|^{\frac{2}{3}}.$$
\end{theorem}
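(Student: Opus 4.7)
The plan is to combine Siegel's formula with the functional equation of $\zeta_K$, using the minimum-trace hypothesis to collapse the Siegel sum to a single summand, and then to bound the remaining Siegel coefficient using the short-vector strategy from the proof of Theorem \ref{maintheorem}.

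First, I would observe that the hypothesis $\min_{\gamma \in \mathcal{O}_K^{\vee,+}} \mathrm{Tr}(\gamma) = r_d$ forces $s_\ell^K(2) = 0$ for every $\ell < r_d$, since the defining sum in \eqref{equationsk} is empty. So Theorem \ref{theoremzetafuntion} reduces to the single identity
\[\zeta_K(-1) = 2^d\, b_{r_d}(2d)\, s_{r_d}^K(2).\]
Combining this with the functional equation \eqref{eq:zeta FE}, taking absolute values (noting that $(-1)^d b_{r_d}(2d) > 0$, exactly as in the even/odd case split of the proof of Theorem \ref{maintheorem}), and using $\zeta_K(2) > 1$, I would obtain
\[|\Delta_K|^{3/2} < (8\pi)^d\, |b_{r_d}(2d)|\, s_{r_d}^K(2).\]

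To finish, it suffices to show the clean upper bound $s_{r_d}^K(2) \leq (\pi/2)^d d$, which combined with the previous display yields $|\Delta_K|^{3/2} < |b_{r_d}(2d)|(4\pi^2)^d d$ and hence the stated discriminant bound. For this upper bound I would follow the construction in the proof of Theorem \ref{maintheorem}: given a universal $\mathbb{Z}$-form $Q$ of rank $R$, form the positive definite rank-$Rd$ $\mathbb{Z}$-form $q(x) = \mathrm{Tr}(\delta Q(\cdot))$, use the universality of $Q$ to inject the finite set $\{\gamma \in \mathcal{O}_K^{\vee,+} : \mathrm{Tr}(\gamma) = r_d\}$ into the short vectors of $q$ (whose count is controlled by Theorem \ref{theoremcin}), and apply Lemma \ref{lemmasigma} to bound each $\sigma((\gamma)(\mathcal{O}_K^\vee)^{-1})$ by $g(r_d, \Delta_K)$.

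The hard step will be distilling these ingredients into a bound on $s_{r_d}^K(2)$ that is independent of the rank $R$ of the universal $\mathbb{Z}$-form and takes exactly the form $(\pi/2)^d d$. This is where the degree hypothesis $d \leq 43$ should enter, presumably via a careful numerical comparison between $C(Rd, r_d)$, $g(r_d, \Delta_K)$, and $|b_{r_d}(2d)|$ for small degrees, following the argument of Kala--Yatsyna \cite[proof of Theorem 5.1]{kalayatsyna}.
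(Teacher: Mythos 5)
Your first two steps---collapsing Siegel's formula to $\zeta_K(-1)=2^d\,b_{r_d}(2d)\,s_{r_d}^K(2)$ via the minimal-trace hypothesis, then using the functional equation together with $\zeta_K(2)>1$---agree with the paper's reduction. The gap is your third step, which is where all the content of the theorem sits: you need an upper bound on $s_{r_d}^K(2)$ that is independent of the rank $R$ of the universal $\mathbb{Z}$-form (essentially $s_{r_d}^K(2)\le d$, which is where the factor $d$ in the final bound comes from), and the route you sketch cannot produce one. Injecting the minimal-trace elements into the short vectors of $q=\mathrm{Tr}(\delta Q)$ only gives $s_{r_d}^K(2)\le C(Rd,r_d)$, and $C(Rd,r_d)$ grows without bound as $R$ grows; the theorem places no restriction on $R$, so no numerical comparison can rescue this. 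Likewise Lemma \ref{lemmasigma} is both unnecessary and counterproductive here: its bound $g(r_d,\Delta_K)$ grows with $\Delta_K$, whereas the paper's proof observes (via \cite[Lemma 4.6]{kalayatsyna}) that any $\gamma\in\mathcal{O}_K^{\vee,+}$ of minimal trace generates $\mathcal{O}_K^{\vee}$, so that $\sigma((\gamma)(\mathcal{O}_K^{\vee})^{-1})=1$ exactly, and $s_{r_d}^K(2)$ is simply the number of such $\gamma$.

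What actually bounds that number---and the reason the hypotheses ``$\mathbb{Z}$-form'' and ``$d\le 43$'' appear at all---is the tensor-product structure exploited in the proof of \cite[Theorem 5.1]{kalayatsyna}, which the paper imports almost verbatim: because $Q$ has coefficients in $\mathbb{Z}$, the trace lattice $(\mathcal{O}_K^R,\mathrm{Tr}(\delta Q))$ is the tensor product of the $\mathbb{Z}$-lattice $(\mathbb{Z}^R,Q)$ with the rank-$d$ lattice $(\mathcal{O}_K,\mathrm{Tr}(\delta xy))$, and Kitaoka's theorem on minimal vectors of tensor products of positive definite lattices (valid when one factor has rank at most $43$) forces the vectors realizing the minimum to split, which is what yields a bound on the number of minimal-trace elements depending only on $d$. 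Your guess that $d\le 43$ enters ``via a careful numerical comparison for small degrees'' is therefore not correct---$43$ is Kitaoka's threshold---and, tellingly, your sketch never uses the $\mathbb{Z}$-coefficient hypothesis beyond what would hold for an arbitrary universal $\mathcal{O}_K$-form, for which no $R$-independent bound of this shape can exist (that is precisely why the bound in Theorem \ref{maintheorem} depends on $R$). As written, the proposal reproduces the easy reduction but leaves the decisive estimate unproved, with a proposed mechanism that would fail.
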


\begin{proof}%[\it Proof of Theorem $\ref{theoremliftingproblem}]
The proof of \cite[Theorem 5.1]{kalayatsyna} carries over almost verbatim until the middle of Page 18 of \cite{kalayatsyna}. This is because the assumption that $\min_{\gamma \in \mathcal{O}_K^{\vee,+}} \mathrm{Tr}(\gamma)=r_d$ combined with Theorem \ref{theoremzetafuntion} imply that $\zeta_K(-1)=2^d  b_{r_d}(2d) s_{r_d}^K(2)$.

We also note that if $\alpha=\alpha'\delta \in \mathcal{O}_K^{\vee,+}$, with $\alpha'\in \mathcal{O}_K$, is such that $\mathrm{Tr}(\alpha'\delta)=r_d$, then \cite[Lemma 4.6]{kalayatsyna} implies that $\alpha'$ is a unit $\mathcal{O}_K$ and, hence, $\mathcal{O}_K^{\vee}=(\delta)=(\alpha)$. From this we deduce that $$\sigma((\alpha)(\mathcal{O}_K^{\vee})^{-1})=\sigma(\mathcal{O}_K^{\vee}(\mathcal{O}_K^{\vee})^{-1})=\sigma(\mathcal{O}_K)=1,$$ exactly as in the proof of \cite[Theorem 5.1]{kalayatsyna}.
\end{proof}

\section{Generators for the positive part of ideals}\label{section3}

One idea for extending the proof of \cite[Theorem 5.1]{kalayatsyna} when $\mathcal{O}_K^{\vee,+}$ is not principal is to consider the direct sum of forms $\mathrm{Tr}(\delta_1 Q),\dots, \mathrm{Tr}(\delta_n Q)$ for suitable $\delta_1,\dots, \delta_n \in \mathcal{O}_K^{\vee,+}$, instead of just $\mathrm{Tr}(\delta Q)$. More precisely, one can use the following approach. We can consider  whether there exists a decomposition 
$$\mathcal{O}_K^{\vee,+,0}= \delta_1 \mathcal{O}_K^{+, 0}+\dots+ \delta_n\mathcal{O}_K^{+, 0},$$ where $\mathcal{O}_K^{+,0}=\mathcal{O}_K^+ \cup \{ 0\}$ and $\mathcal{O}_K^{\vee, +,0}=\mathcal{O}_K^{\vee, +} \cup \{ 0\}$, for $\delta_1,\dots,\delta_n \in K^+$ such that the number $n$ only depends on the degree of $K$. If it did exist, then one could employ the same proof strategy as before, but using the form 
$\mathrm{Tr}(\delta_1 Q+\dots+ \delta_n Q)=\mathrm{Tr}(\delta_1 Q)+\dots+\mathrm{Tr}(\delta_n Q)$ (by which we mean the orthogonal sum of quadratic forms) instead of just $\mathrm{Tr}(\delta Q)$ for some $\delta \in \mathcal{O}_K^{\vee, +}$. 

To proceed slightly more generally, let $I$ be a fractional ideal of $\mathcal{O}_K$. We are now interested in answering Question \ref{question1} of the introduction, i.e., we are interested in providing a bound for the number of totally positive generators for $I^{+,0}$. By clearing the denominators of $I$, we can relate the number of totally positive generators for $I^{+,0}$ to the number of totally positive generators of an integral ideal. For this reason, we can work with integral ideals for many of our results. 
However, unless we explicitly state otherwise, by an ideal we always mean a non-zero fractional ideal.

Let now $I$ be a (fractional) ideal and let $$S=\{ \alpha \in I^+ \; : \; \alpha \text{ cannot be written as } \alpha= \beta + \gamma \text{ with } \beta, \gamma \in I^+ \}.$$
We call each element $\alpha$ of this set $I$-indecomposable, generalizing the notion of indecomposables (see the survey \cite{kalasurvey} for more on indecomposables and how they relate to quadratic forms). If we let $S'$ to be a set of representatives of $S$ modulo multiplication by totally positive units of $\mathcal{O}_K$, then we have 
\begin{equation}\label{eq:ideal}
    I^{+,0}= \sum_{\alpha \in S'} \alpha \mathcal{O}_K^{+, 0}.
\end{equation}

We now proceed to obtain bounds on the cardinality of the set $S'$. The following theorem is proved using an adaptation of an argument of Kala--Yatsyna \cite[Theorem 5]{kalayatsynabulletinofthelms}.

\begin{proposition}\label{propnormindecomposables}
    Let $K$ be a totally real number field of discriminant $\Delta_K$ and let $I$ be an integral ideal of $\mathcal{O}_K$. For every element $\alpha \in I^+$ with $\mathrm{N}(\alpha) > \Delta_K \mathrm{N}(I)^2$ there exists $\beta \in I$ such that $\alpha \succ \beta^2$.

    In particular, no such element $\alpha$ is $I$-indecomposable.
\end{proposition}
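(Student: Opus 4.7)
The plan is to produce a nonzero $\beta \in I$ whose real embeddings satisfy $\sigma_i(\beta)^2 < \sigma_i(\alpha)$ for every real embedding $\sigma_i$ of $K$, and then use the decomposition $\alpha = \beta^2 + (\alpha - \beta^2)$. Since $I$ is integral we have $\beta^2 \in I^2 \subseteq I$, and $\beta \neq 0$ forces $\beta^2$ to be totally positive, so $\beta^2 \in I^+$. By the choice of $\beta$, the difference $\alpha - \beta^2$ lies in $I$ and is totally positive in every embedding, so $\alpha - \beta^2 \in I^+$ as well. This simultaneously gives $\alpha \succ \beta^2$ and writes $\alpha$ as a sum of two elements of $I^+$, proving that $\alpha$ is not $I$-indecomposable.

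To produce such a $\beta$ I would invoke Minkowski's theorem in the geometry of numbers. Let $\iota \colon K \to \mathbb{R}^d$ be the Minkowski embedding via the $d$ real embeddings $\sigma_1,\dots,\sigma_d$; for a totally real field, $\iota(I)$ is a full lattice of covolume $\sqrt{\Delta_K}\,\mathrm{N}(I)$. Consider the open, convex, origin-symmetric box
$$B = \bigl\{(x_1,\dots,x_d) \in \mathbb{R}^d : |x_i| < \sqrt{\sigma_i(\alpha)} \text{ for } i=1,\dots,d \bigr\},$$
whose volume equals $2^d \prod_i \sqrt{\sigma_i(\alpha)} = 2^d \sqrt{\mathrm{N}(\alpha)}$, because $\alpha \in I^+$ gives $\mathrm{N}(\alpha) = \prod_i \sigma_i(\alpha)$. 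The hypothesis $\mathrm{N}(\alpha) > \Delta_K \mathrm{N}(I)^2$ rearranges to exactly the inequality $\mathrm{vol}(B) > 2^d \cdot \mathrm{covol}(\iota(I))$, so Minkowski's theorem supplies a nonzero lattice point $\iota(\beta) \in B$, i.e.\ a nonzero $\beta \in I$ with $\sigma_i(\beta)^2 < \sigma_i(\alpha)$ for every $i$.

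I do not expect a substantive obstacle in this plan: once one fixes the correct normalisation of the covolume for totally real fields and chooses the right box, the hypothesis on $\mathrm{N}(\alpha)$ matches the Minkowski threshold term for term. The only points requiring a little bookkeeping are keeping the strict inequality throughout (so that the \emph{open} box already suffices and no closure/shrinking argument is needed) and verifying that all three conditions $\beta^2 \in I$, $\beta^2 \in K^+$, and $\alpha - \beta^2 \in K^+$ really hold simultaneously, which, as sketched above, follows from integrality of $I$, nonvanishing of $\beta$, and the embedding inequalities, respectively.
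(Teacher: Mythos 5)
Your proof is correct and follows essentially the same route as the paper: apply Minkowski's lattice point theorem to a box with sides $\sqrt{\sigma_i(\alpha)}$ in the Minkowski space of $K$, using $\mathrm{covol}(\iota(I))=\sqrt{\Delta_K}\,\mathrm{N}(I)$ and the hypothesis $\mathrm{N}(\alpha)>\Delta_K\mathrm{N}(I)^2$ to beat the $2^d$ threshold, then decompose $\alpha=\beta^2+(\alpha-\beta^2)$. The only (harmless) difference is that you use the open box with the strict-volume form of Minkowski's theorem, whereas the paper shrinks a closed box by a small $\epsilon$; you also spell out the ``in particular'' step, which the paper leaves implicit.
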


\begin{proof}
    Assume that the degree of the extension $K/\mathbb{Q}$ is $d$. In the Minkowski space associated to $K$ consider the box defined by $|x_i| \leq \sqrt{\sigma_i (\alpha)} - \epsilon$, where $\sigma_1,\dots,\sigma_d$ are the embeddings of $K$ into $\mathbb{R}$, and $\epsilon > 0$ is small enough so that $$\prod_{i=1}^d \sqrt{\sigma_i(\alpha)}> \sqrt{\Delta_K} \mathrm{N} (I).$$ This is possible because $$\prod_{i=1}^d \sqrt{\sigma_i(\alpha)}= \sqrt{\mathrm{N}(\alpha)} > \sqrt{\Delta_K} \mathrm{N} (I).$$ The volume of the box is bigger than $2^d \sqrt{\Delta_K} \mathrm{N} (I)$. Therefore, since the volume of the fundamental domain $\iota (I)$, where $\iota$ is the Minkowski embedding, is equal to $\sqrt{\Delta_K} \mathrm{N} (I) $ \cite[Proposition (5.2)]{neukirchbook}, we find using, Minkowski's theorem \cite[Page 27]{neukirchbook} we find that there exists a non-zero lattice point in this box. Thus, there exist $\beta \in I$ such that $\sqrt{\sigma_i(\alpha)}> \sigma_i(\beta)$ for $i=1,\dots,d$. This implies that $\sigma_i(\alpha) > \sigma_i(\beta^2)$ for $i=1,\dots,d$, i.e., $\alpha \succ \beta^2$. This proves our proposition.
\end{proof}

The following result will be useful in our work below.

\begin{proposition}\label{propoboundonnumberofreps}
    Let $K$ be a totally real number field of degree $d$. For any positive integer $X$ we let $V_X$ be a set of representatives of classes of elements $\alpha \in \mathcal{O}_K^+$ with $\mathrm{N}(\alpha) \leq X$, up to multiplication by squares of units in $\mathcal{O}_K$. Then $$\# V_X \ll X (\log{(X)})^{d-1}, $$ where the implied constant only depends on $d$.
\end{proposition}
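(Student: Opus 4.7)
The plan is to count $V_X$ by going up to principal ideals and then to all integral ideals of bounded norm, paying only a multiplicative constant depending on $d$ for each reduction. For the first reduction, I would invoke Dirichlet's unit theorem: since $K$ is totally real of degree $d$, one has $\mathcal{O}_K^{\times} \cong \{\pm 1\} \times \mathbb{Z}^{d-1}$, and hence the subgroup of squares $(\mathcal{O}_K^{\times})^2$ has index exactly $2^d$ in $\mathcal{O}_K^{\times}$. Writing $(\mathcal{O}_K^{\times})^+$ for the group of totally positive units, I have $(\mathcal{O}_K^{\times})^2 \subseteq (\mathcal{O}_K^{\times})^+$, so the index $c(d) := [(\mathcal{O}_K^{\times})^+ : (\mathcal{O}_K^{\times})^2]$ divides $2^d$; in particular it is bounded by a constant depending only on $d$.

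Next, two totally positive elements $\alpha, \alpha' \in \mathcal{O}_K^+$ generate the same principal ideal if and only if $\alpha'/\alpha \in (\mathcal{O}_K^{\times})^+$. Consequently, the natural quotient map
\[\mathcal{O}_K^+ / (\mathcal{O}_K^{\times})^2 \twoheadrightarrow \mathcal{O}_K^+/(\mathcal{O}_K^{\times})^+ \hookrightarrow \{\text{principal integral ideals of } \mathcal{O}_K\}\]
has fibers of size $c(d)$. Restricting to representatives of norm at most $X$ yields
\[\#V_X \;\leq\; c(d) \cdot \#\{ I \subseteq \mathcal{O}_K : \mathrm{N}(I) \leq X\}.\]

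Finally, I would bound the number of integral ideals of norm at most $X$ by a soft comparison with the $d$-fold divisor function. Let $r_K(n)$ denote the number of integral ideals of norm $n$; it is multiplicative, so it suffices to control $r_K(p^k)$. From $p\mathcal{O}_K = \prod_i \mathfrak{p}_i^{e_i}$ with $\sum_i e_i f_i = d$, and hence $\sum_i f_i \leq d$, a direct padding argument gives
\[ r_K(p^k) \;=\; \#\bigl\{(a_i) \in \mathbb{Z}_{\geq 0}^{g} : \textstyle\sum_i a_i f_i = k \bigr\} \;\leq\; \binom{k+d-1}{d-1} \;=\; d_d(p^k),\]
so $r_K(n) \leq d_d(n)$ pointwise. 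The classical estimate $\sum_{n \leq X} d_d(n) \ll_d X(\log X)^{d-1}$ then completes the argument. I do not expect any real obstacle: the divisor-sum bound is textbook (hyperbola method iterated $d-1$ times), and everything else is formal manipulation with units and ideals.
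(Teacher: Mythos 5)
Your proposal is correct and takes essentially the same route as the argument the paper relies on (the paper simply defers to the second part of the proof of Kala--Yatsyna's Theorem~6, which likewise reduces the count to ideals of bounded norm after controlling the index of squares of units): your chain $\#V_X \leq c(d)\cdot\#\{I\subseteq\mathcal{O}_K:\mathrm{N}(I)\leq X\}$ with $c(d)=[(\mathcal{O}_K^{\times})^+:(\mathcal{O}_K^{\times})^2]\leq 2^d$, the injection of classes modulo totally positive units into principal ideals, and the comparison $r_K(n)\leq d_d(n)$ followed by the classical divisor-sum estimate supply exactly the details the paper omits. No gaps; all steps (freeness of the unit action, the padding bound $r_K(p^k)\leq\binom{k+d-1}{d-1}$, and the dependence of all constants only on $d$) check out.
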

\begin{proof}
    We proceed exactly as in the second part of the proof of \cite[Theorem 6]{kalayatsynabulletinofthelms}. We will omit the details.
\end{proof}

\begin{definition}\label{defkappa}
 Let $K$ be a totally real number field  with ring of integers $\mathcal{O}_K$ and let $I$ be a fractional ideal of $\mathcal{O}_K$. Define $\kappa(I)$ as the smallest number $n$ such that there exist $\alpha_i \in K^+ $ for $i=1,2,\dots,n$ with $$I^{+,0}=\alpha_1 \mathcal{O}_K^{+, 0}+\dots+ \alpha_n\mathcal{O}_K^{+, 0}.$$ Moreover, we define $$\kappa(K)=\max_{I \text{ ideal of } \mathcal{O}_K} \kappa(I).$$
\end{definition}

For example, viewing $\mathcal{O}_K$ as an ideal, we have $\kappa(\mathcal{O}_K)=1$, for $\mathcal{O}_K^{+,0}=1\cdot \mathcal{O}_K^{+, 0}$.

Recall that the narrow class group of a number field $K$ is defined as the quotient $I_K/P_K^+$, where $I_K$ is the group of fractional ideals of $K$ and $P_K^+$ is the subgroup of principal fractional ideals that have a totally positive generator. We denote the narrow class group of $K$ by $Cl^+_K$.

\begin{lemma}\label{kappadoesntdependonclass}
    If $I, J$ are two fractional ideals of $K$ that belong to the same class of $Cl^+_K$, then $\kappa(I)=\kappa(J)$.
\end{lemma}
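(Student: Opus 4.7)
The plan is to use the definition of the narrow class group: saying that $I$ and $J$ lie in the same class of $Cl_K^+$ means exactly that $IJ^{-1}$ is principal with a totally positive generator, i.e., there exists $\lambda \in K^+$ with $I = \lambda J$. I would then show that multiplication by $\lambda$ provides an $\mathcal{O}_K^{+,0}$-equivariant bijection $J^{+,0} \to I^{+,0}$, so that any expression for $J^{+,0}$ as a sum of $n$ translates of $\mathcal{O}_K^{+,0}$ transports verbatim to such an expression for $I^{+,0}$ with the same number of summands.

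In more detail, first I would verify that $\lambda J^{+,0} = I^{+,0}$: if $x \in J^+$, then $\lambda x \in \lambda J = I$, and since both $\lambda$ and $x$ are totally positive so is $\lambda x$, giving $\lambda x \in I^+$; conversely, any $y \in I^+$ can be written as $\lambda x$ with $x = \lambda^{-1} y \in J$, and $x$ is totally positive because $\lambda^{-1} \in K^+$. The zero element is preserved trivially, so $\lambda J^{+,0} = I^{+,0}$.

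Next, suppose $\kappa(J) = n$ and pick $\alpha_1, \dots, \alpha_n \in K^+$ with
\[
J^{+,0} = \alpha_1 \mathcal{O}_K^{+,0} + \dots + \alpha_n \mathcal{O}_K^{+,0}.
\]
Multiplying both sides by $\lambda$ and using the identity just established yields
\[
I^{+,0} = (\lambda \alpha_1) \mathcal{O}_K^{+,0} + \dots + (\lambda \alpha_n) \mathcal{O}_K^{+,0},
\]
where each $\lambda \alpha_i \in K^+$ because $K^+$ is closed under multiplication. This shows $\kappa(I) \leq n = \kappa(J)$. Swapping the roles of $I$ and $J$ (using $J = \lambda^{-1} I$ with $\lambda^{-1} \in K^+$) gives the reverse inequality, and hence $\kappa(I) = \kappa(J)$.

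There is no real obstacle here; the statement is essentially a bookkeeping consequence of the fact that $\kappa$ is defined purely in terms of $K^+$-scalings and the monoid $\mathcal{O}_K^{+,0}$, both of which are preserved under multiplication by an element of $K^+$. The only point to be careful about is making sure that $\lambda \in K^+$ (not merely that $(\lambda)$ equals $IJ^{-1}$ as ideals), which is exactly the extra rigidity imposed by $Cl_K^+$ rather than the ordinary class group $Cl_K$.
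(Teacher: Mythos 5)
Your proposal is correct and follows essentially the same route as the paper: extract $\lambda \in K^+$ with $I = \lambda J$ from the definition of $Cl^+_K$, transport a generating set for $J^{+,0}$ to one for $I^{+,0}$ by multiplication by $\lambda$, and use $\lambda^{-1}$ for the reverse inequality. No issues.
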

\begin{proof}
    Since $I$ and $J$ belong to the same class in $Cl^+_K$, we have that $I=\alpha J$ for some $\alpha \in K^+$. Thus if $\{ \alpha_1,\alpha_2,\dots \} $ is a generating set for $J^{+,0}$ (finite or infinite), then $\{ \alpha \alpha_1,\alpha \alpha_2,\dots \} $ is a generating set for $I^{+,0}$. This implies that $\kappa(I) \leq \kappa(J)$. By an entirely similar argument using $\frac{1}{\alpha}$ instead of $\alpha$ we find that $\kappa(J) \leq \kappa(I)$. Therefore, $\kappa(I) = \kappa(J)$. 
\end{proof}

\begin{proposition}\label{prop:kappaI}
    If $I$ is any fractional ideal in a totally real number field $K$, then $\kappa(I)$ is at most the number of classes of $I$-indecomposables modulo multiplication by totally positive units of $\mathcal{O}_K$.    
    In particular, $\kappa(I)$ is finite.
\end{proposition}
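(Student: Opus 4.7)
The plan is to get the bound on $\kappa(I)$ immediately from the decomposition \eqref{eq:ideal}, and then establish finiteness by combining Propositions \ref{propnormindecomposables} and \ref{propoboundonnumberofreps} after reducing to the case of an integral ideal.

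First, I would observe that the displayed formula \eqref{eq:ideal} already asserts $I^{+,0}=\sum_{\alpha\in S'}\alpha\,\mathcal{O}_K^{+,0}$, where $S'$ is a set of representatives of $I$-indecomposables modulo multiplication by totally positive units. Each $\alpha\in S'$ lies in $I^+\subseteq K^+$, so this exhibits a generating family for $I^{+,0}$ of size $|S'|$ in the sense of Definition \ref{defkappa}. Hence $\kappa(I)\leq |S'|$, which is exactly the first assertion of the proposition.

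Next, I would reduce the finiteness claim to the case of an integral ideal. Given a fractional ideal $I$, choose a positive integer $m$ with $mI\subseteq\mathcal{O}_K$. Multiplication by $m$ defines a bijection between $I$-indecomposables and $(mI)$-indecomposables, equivariant under the action of totally positive units, so it is enough to prove $|S'|<\infty$ when $I$ is integral. Alternatively, this reduction is immediate from Lemma \ref{kappadoesntdependonclass} since $I$ and $mI$ lie in the same narrow class.

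Assume therefore that $I$ is integral. Proposition \ref{propnormindecomposables} furnishes, for every $\alpha\in I^+$ with $\mathrm N(\alpha)>\Delta_K\mathrm N(I)^2$, an element $\beta\in I\setminus\{0\}$ such that $\alpha\succ\beta^2$. Since $I$ is integral, $\beta^2\in I^2\subseteq I$, and $\beta^2$ is totally positive; moreover $\alpha-\beta^2\in I$ is totally positive by hypothesis. Thus $\alpha=\beta^2+(\alpha-\beta^2)$ exhibits $\alpha$ as a sum of two elements of $I^+$, so $\alpha$ is not $I$-indecomposable. Consequently every $I$-indecomposable satisfies $\mathrm N(\alpha)\leq \Delta_K\mathrm N(I)^2$.

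Finally, since $I\subseteq\mathcal{O}_K$, the $I$-indecomposables live in $\mathcal{O}_K^+$. Applying Proposition \ref{propoboundonnumberofreps} with $X=\Delta_K\mathrm N(I)^2$ shows that there are only finitely many classes of such elements modulo multiplication by squares of units in $\mathcal{O}_K$. Because $(\mathcal{O}_K^\times)^2$ is contained in the group of totally positive units $\mathcal{O}_K^{\times,+}$, the equivalence modulo totally positive units is coarser, so $|S'|$ is finite as well. I expect no real obstacle here; the only point requiring care is the passage between the three equivalences (squares of units vs.\ totally positive units) and the verification that the element produced by Proposition \ref{propnormindecomposables} really lies in $I^+$, which uses integrality of $I$ in an essential way and motivates the initial reduction.
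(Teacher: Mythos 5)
Your proof is correct and follows essentially the same route as the paper: reduce to an integral ideal via Lemma \ref{kappadoesntdependonclass}, read off the bound $\kappa(I)\leq|S'|$ from \eqref{eq:ideal}, and get finiteness of $S'$ from Propositions \ref{propnormindecomposables} and \ref{propoboundonnumberofreps}. You merely spell out two details the paper leaves implicit (why $\alpha\succ\beta^2$ kills indecomposability for integral $I$, and the passage from classes modulo squares of units to the coarser equivalence modulo totally positive units), both handled correctly.
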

\begin{proof}
   There exists $c \in \mathcal{O}_K^+$ such that $cI$ is an integral ideal of $\mathcal{O}_K$. Since $\kappa(I) = \kappa(cI)$ by Lemma \ref{kappadoesntdependonclass}, we can assume without loss of generality that $I$ is an integral ideal. 
   Recall that by \eqref{eq:ideal} we have  $$I^{+,0}= \sum_{\alpha \in S'} \alpha \mathcal{O}_K^{+, 0},$$ where $S'$ is the set of $I$-indecomposables modulo the action by totally positive units. 
   By Proposition \ref{propnormindecomposables}, all elements of $S'$ have bounded norm, and by  Proposition \ref{propoboundonnumberofreps}, there are only finitely many such elements.
\end{proof}

Now we can show that $\kappa(K)$ is finite thanks to the finiteness of the narrow class group.

\begin{theorem}\label{propositionkappa}
For every totally real number field $K$, the number $\kappa(K)$ is finite.
\end{theorem}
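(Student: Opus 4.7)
The plan is to combine the two preceding results, \textbf{Lemma \ref{kappadoesntdependonclass}} and \textbf{Proposition \ref{prop:kappaI}}, together with the classical finiteness of the narrow class group. The key observation is that, although the maximum in the definition of $\kappa(K)$ ranges over the infinite collection of all fractional ideals of $\mathcal{O}_K$, the function $I \mapsto \kappa(I)$ is constant on narrow ideal classes.

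Concretely, I would first invoke Lemma \ref{kappadoesntdependonclass}: if $I$ and $J$ lie in the same class of $Cl^+_K$, then $\kappa(I)=\kappa(J)$. Therefore, picking a set of representatives $I_1,\dots,I_h$ for the (finitely many) classes of the narrow class group $Cl^+_K$, we obtain
\[
\kappa(K)=\max_{1\le j\le h} \kappa(I_j).
\]
Next I would apply Proposition \ref{prop:kappaI} to each representative $I_j$, which guarantees that each $\kappa(I_j)$ is finite (bounded by the number of classes of $I_j$-indecomposables modulo totally positive units). A maximum of finitely many finite numbers is finite, so $\kappa(K)<\infty$.

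There is no real obstacle here, since all the work has already been carried out in the earlier results; the only thing to be careful about is that the narrow class group $Cl^+_K$ is genuinely finite for a number field, which is a standard fact. One could, if desired, also record the explicit bound
\[
\kappa(K)\le \max_{1\le j\le h}\#\{I_j\text{-indecomposables modulo totally positive units}\},
\]
but for the theorem as stated only finiteness is required.
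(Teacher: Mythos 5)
Your proposal is correct and follows essentially the same route as the paper: reduce to finitely many representatives of the narrow class group via Lemma \ref{kappadoesntdependonclass}, then invoke the finiteness of each $\kappa(I_j)$ from Proposition \ref{prop:kappaI}. The paper's proof is just a terser version of the same argument, so there is nothing to add.
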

\begin{proof}
     Let $I_1,\dots, I_s$ be fractional ideals of $K$ that are representatives of the narrow class group of $K$. It follows immediately from Lemma \ref{kappadoesntdependonclass} that $\kappa(K)= \max \{ \kappa(I_1),\dots,\kappa(I_s) \} $, which proves our theorem.
\end{proof}

\begin{example}
    A totally real number field $K$ has narrow class number 1 if and only if it has class number $1$ and has units of all signatures. 
    In such a case, we can take $I_1=\mathcal{O}_K$ as the only representative in the proof of Theorem \ref{propositionkappa}, and so  $\kappa (K)=\kappa (\mathcal{O}_K)=1.$ 
    For example, we have that $\kappa( \mathbb{Q}(\sqrt{2}))=1$ because $\mathbb{Q}(\sqrt{2})$ has class number $1$ and its fundamental unit is $1+\sqrt{2}$, which has negative norm (see, for example, the LMFDB \cite{lmfdb} database for these basic facts).
\end{example}

\begin{remark}
Let $K$ be a totally real field and let $I$ be an ideal of $\mathcal{O}_K$. It follows from \cite[Lemma 2.1]{fukshanskywang} that $I$ has a $\mathbb{Z}$-basis with basis elements in $I^+$. Let $A=\{a_1,\dots,a_d\} $ be such a basis and consider the set $$S(A)=\{ \lambda_1 a_1+\dots+\lambda_d a_d \: :\;  \lambda_i \in \mathbb{Z}^+ 
\text{ for every } i\}. $$ Since  $S(A)$ is contained $I^+$, one might hope that the above inclusion would be equality. However, the set $I^+ \setminus S(A)$ is always infinite (see \cite[Page 11]{fukshanskywang}).
\end{remark}

Before we focus on the case of real quadratic fields we need to introduce some notation and terminology that will be used. Let $K=\mathbb{Q}(\sqrt{D})$ be a real quadratic number field with ring of integers $\mathcal{O}_K$. Write $\sigma$ for the generator of the Galois group of $K/\mathbb{Q}$; for $\alpha \in K$ we denote its conjugate by $\alpha'=\sigma(\alpha)$. 

We define the $(+,-)$ part of $\mathcal{O}_K$, denoted by $\mathcal{O}_K^{(+,-)}$, as the set consisting of all elements $\alpha \in \mathcal{O}_K$ such that $\alpha > 0$ and $\alpha' <0$. An element of $\alpha \in \mathcal{O}_K^{(+,-)}$ is called a $(+,-)$-indecomposable if it cannot be written as $\alpha= \beta + \gamma $ with $\beta, \gamma \in \mathcal{O}_K^{(+,-)}$.

Let now $$\omega_D= \begin{cases} 
          \sqrt{D} & \text{\ \ if \ \ } D \equiv 2,3 \; (\text{mod } 4), \\
          \frac{1+\sqrt{D}}{2} & \text{\ \ if \ \ }  D \equiv 1 \; (\text{mod } 4). \\
\end{cases}$$ Note that with this convention we have that $\mathcal{O}_K=\mathbb{Z}[\omega_D]$. We also let $$\xi_D=-\omega_D'= \begin{cases} 
          \sqrt{D} & \text{\ \ if \ \ }  D \equiv 2,3 \; (\text{mod } 4), \\
          \frac{\sqrt{D}-1}{2} & \text{\ \ if \ \ }  D \equiv 1 \; (\text{mod } 4). \\
\end{cases}$$ 

In the following proposition, for a principal ideal $I$ of $\mathcal{O}_K$, we relate the number $\kappa(I)$ with the continued fraction expansion of $\xi_D$.

\begin{proposition}\label{propprincipalquadratic}
    Let $K=\mathbb{Q}(\sqrt{D})$ be a real quadratic field and let  $\xi_D=[u_0,\widebar{u_1,\dots,u_s}]$ be the periodic continued fraction expansion of $\xi_D$. 
    Let $I=(\alpha)$ be a principal ideal of $K$ for some $\alpha\in K, \alpha>0$. Then
    $$\kappa(I) \leq \begin{cases} 
          u_1+u_2+u_3+\dots+u_{s-1}+u_s & \text{\ \ if \  }  s \text{\ odd, \ } \\
          u_1+u_3+\dots+u_{s-1} & \text{\ \ if \  }  s \text{\ even and \ } \alpha'>0, \\
          u_2+u_4+\dots+u_{s} & \text{\ \ if \  }  s \text{\ even and \ }\alpha'<0. 
          \end{cases}$$  
\end{proposition}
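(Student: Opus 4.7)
The plan is to combine Proposition \ref{prop:kappaI}, which bounds $\kappa(I)$ by the number of $I$-indecomposables modulo totally positive units, with the classical description of indecomposables in real quadratic orders via semiconvergents of the continued fraction of $\xi_D$. First, by Lemma \ref{kappadoesntdependonclass} we may replace $I=(\alpha)$ by any generator in the same narrow class. The parity of $s$ determines the sign $(-1)^s$ of the norm of the fundamental unit $\varepsilon$ of $\mathcal{O}_K$. When $s$ is odd, there is a unit of norm $-1$ whose signature is $(+,-)$; multiplying $\alpha$ by a suitable power of this unit and by $\pm 1$ produces a totally positive generator, so $I$ always lies in the trivial narrow class and $\kappa(I)=\kappa(\mathcal{O}_K)$ regardless of the sign of $\alpha'$. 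When $s$ is even, every unit has signature $(+,+)$ or $(-,-)$, so $I$ has a totally positive generator iff $\alpha'>0$; in that subcase $\kappa(I)=\kappa(\mathcal{O}_K)$, whereas if $\alpha'<0$ the ideal $I$ represents the non-trivial principal narrow class.

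Next, I would identify the $I$-indecomposables in each case. When $I$ is in the trivial narrow class, Proposition \ref{prop:kappaI} bounds $\kappa(\mathcal{O}_K)$ by the number of classical totally positive indecomposables of $\mathcal{O}_K$ modulo totally positive units. When $\alpha>0$ and $\alpha'<0$ (with $s$ even), writing any $\beta\in I^+$ as $\beta=\alpha\gamma$ with $\gamma\in\mathcal{O}_K$ forces $\gamma\in\mathcal{O}_K^{(+,-)}$, and $\beta$ is $I$-indecomposable iff $\gamma$ is a $(+,-)$-indecomposable in the sense defined just before the proposition. Since multiplication by $\alpha$ is equivariant for the action of the totally positive units, the $I$-indecomposables modulo that action are in bijection with the $(+,-)$-indecomposables of $\mathcal{O}_K$ modulo the same action.

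Finally, I would invoke the classical correspondence expressing each indecomposable, up to multiplication by totally positive units, as a specific element built from the convergents and semiconvergents of the continued fraction of $\xi_D$. Counting these semiconvergents in one period of the fundamental totally positive unit gives, for the totally positive indecomposables, the value $u_1+u_2+\cdots+u_s$ when $s$ is odd and $u_1+u_3+\cdots+u_{s-1}$ when $s$ is even; for the $(+,-)$-indecomposables with $s$ even the analogous count, shifted by one index within the period, is $u_2+u_4+\cdots+u_s$. Combined with the case analysis above, this yields the three upper bounds in the statement.

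The main obstacle is the combinatorial verification that semiconvergents account for all indecomposables modulo totally positive units, together with the careful indexing that produces the specific parity sums in the $s$-even case and the correct shift governing the $(+,-)$ count; this is by now standard in the literature on indecomposables in real quadratic orders, and I would cite those results rather than reproducing the combinatorial argument.
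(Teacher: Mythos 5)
Your proposal is correct and follows essentially the same route as the paper: reduce via Lemma \ref{kappadoesntdependonclass} and Proposition \ref{prop:kappaI} to counting classes of indecomposables (in $\mathcal{O}_K$ or in $\mathcal{O}_K^{(+,-)}$, according to the parity of $s$ and the sign of $\alpha'$) modulo totally positive units, then use the semiconvergent characterization of indecomposables and count indices over one period of the fundamental totally positive unit, citing the standard continued-fraction results. The paper carries out the same case analysis and the same index bookkeeping (odd indices over a double period when $s$ is odd, odd resp.\ even indices over one period when $s$ is even), so no substantive difference remains.
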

\begin{proof} 
To prove the result, we will use Proposition \ref{prop:kappaI} thanks to which the number $\kappa(I) $ is smaller or equal to the number of classes of $I$-indecomposables modulo multiplication by totally positive units.

Let $\ve$  be the fundamental unit of $K$.
Recall that the period length $s$ is odd if and only if the norm $N(\ve)=-1$.
For convenience, assume that the continued fraction is $\xi_D=[u_0,\widebar{u_1,\dots,u_s}]=[u_0,u_1,\dots,u_s,u_{s+1},\dots]$, i.e., we are letting $u_{ts+i}=u_i$ for each $t,i$.

If $I$ has a totally positive generator, then $\kappa(I)=\kappa(\co_K)$ by Lemma \ref{kappadoesntdependonclass}. In particular, this is always the case when $s$ is odd. If $I=(\alpha)$ for $\alpha>0,\alpha'<0$, then $$I^{+}=\{\alpha\beta\in\co_K^{+}\: : \; \beta\in\co_K\}=\{\alpha\beta\: : \; \beta\in\co_K^{(+,-)}\}\simeq \co_K^{(+,-)}$$
(here, the last isomorphism $\simeq$ is an isomorphism of additive semigroups). 
Thus the $I$-indecomposables are in a bijection with $(+,-)$-indecomposables (and likewise for their classes up to unit multiplication). Thus in both cases, we are interested in the number of unit classes of indecomposables either in $\co_K$ or in $\mathcal{O}_K^{(+,-)}$.

Luckily, in both of these cases, the indecomposables have a nice characterization as the upper (or lower) semiconvergents to $\xi_D$. Specifically, let $\frac{p_i}{q_i}=[u_0,\dots,u_i]$ for coprime positive positive integers $p_i, q_i$ and $i \geq 0$. We also write $p_{-1}=1$ and $q_{-1}=0$. For every $i \geq -1$ we call the algebraic integers $\alpha_i=p_i-q_i\omega_D'$ the convergents. We also define the semiconvergents $\alpha_{i,r}=\alpha_i+r\alpha_{i+1}$ for $i \geq -1$ and $0 \leq r < u_{i+2}$.
Then, up to conjugation $\sigma$, the elements 
$\alpha_{i,r}$  with odd $i\geq -1$ are exactly all the indecomposables in $\co_K$, and these elements with even $i\geq 0$ are exactly the $(+,-)$-indecomposables 
(see \cite[Subsection 2.1]{BK}, \cite[Theorem 2]{dressscharlau}, \cite[\S 16]{perronbook}; note that the first two cited references deal only with the case of odd $i$ and indecomposables in $\co_K$, but the case of $(+,-)$-indecomposables is completely analogous).

As $\ve=\alpha_{s-1}$ and $\ve\alpha_{i,r}=\alpha_{i+s,r}$, the preceding characterization immediately implies that the numbers of unit classes of indecomposables are as follows (cf. \cite[Subsection 2.1]{BK}):

CASE 1: Assume that $s$ odd. Then we do not need to distinguish indecomposables in $\co_K$ and in $\mathcal{O}_K^{(+,-)}$ (because multiplication by $\ve$ turns the former into the latter).
Since we are interested in the number of indecomposables up to multiplication of totally positive units (that are exactly $\ve^{2n}$ for $n\in\Z$, and $\ve^2=\alpha_{2s-1}$), we need to consider odd indices $i$ in the range $-1\leq i< 2s-1$. These give us 
\begin{align*}
&u_1+u_3+\dots+u_{s}+u_{s+2}+\dots+u_{2s-1}=\\
&u_1+u_3+\dots+u_{s}+u_{2}\ \ \ +\dots+u_{s-1}\ =\\
&u_1+u_2+u_3+\dots+u_{s-1}+u_s.
\end{align*}

CASE 2: Assume that $s$ even and that $\alpha'>0$. We need to count the number of indecomposables in $\co_K$. We have that $\ve=\alpha_{s-1}$ is the fundamental totally positive unit, and so the classes of indecomposables are given by $\alpha_{i,r}$ for odd $i$ in the range $-1\leq i< s$. There are clearly $u_1+u_3+\dots+u_{s-1}$ of them.

CASE 3: Assume that $s$ even and that $\alpha'<0$. We need to count the number of indecomposables in $\mathcal{O}_K^{(+,-)}$. We only need to consider the even indices $i$ in the range $-1\leq i< s$, obtaining $u_2+u_4+\dots+u_{s}$. 
    \end{proof}

\begin{example}
    In this example we show, using ideas from Proposition \ref{propprincipalquadratic} above, that $\kappa(\mathbb{Q}(\sqrt{3}))=2$. The class group of $\mathbb{Q}(\sqrt{3})$ is trivial and the fundamental unit is $2+\sqrt{3}$, which has positive norm (see, for example, the LMFDB \cite{lmfdb} database for these basic facts). Let now $I$ be any ideal of $\mathbb{Q}(\sqrt{3})$. Then $I$ is principal and is either generated by a totally positive element or by an element in $\mathcal{O}_{\mathbb{Q}(\sqrt{3})}^{(+,-)}$. In the first case, we have that $\kappa(I)=1$. So assume from now on that $I$ is generated by an element in $\mathcal{O}_{\mathbb{Q}(\sqrt{3})}^{(+,-)}$. The continued fraction expansion of $\sqrt{3}$ is $[1,\widebar{1,2}]$. Using Proposition \ref{propprincipalquadratic} we find that $\kappa(I) \leq 2$. Thus $\kappa(\mathbb{Q}(\sqrt{3})) \leq 2$. 
    
    We now show that equality is achieved, i.e., that $\kappa(\mathbb{Q}(\sqrt{3})) = 2$. Let $I=(\alpha)$ be an ideal of $\mathbb{Q}(\sqrt{3})$ with $\alpha \in \mathcal{O}_{\mathbb{Q}(\sqrt{3})}^{(+,-)}$. Assume that $I^+=\alpha^+ \mathcal{O}_{\mathbb{Q}(\sqrt{3})}^{+,0}$ with $\alpha^+ \in \mathcal{O}_{\mathbb{Q}(\sqrt{3})}^{+}$, and we will find a contradiction. Since $\alpha^+ \in I$, there exists $\beta \in \mathcal{O}_{\mathbb{Q}(\sqrt{3})}^{(+,-)}$ such that $\alpha^+=\alpha \beta$. Consider now the ideal $I'=(\alpha^+)$. It follows that $I' \subsetneq  I $ because $\beta$ is not a unit due to the fact that the fundamental unit is totally positive. On the other hand, if we denote by $\sigma$ the generator of the Galois group of the extension $\mathbb{Q}(\sqrt{3})/\mathbb{Q}$, since $\alpha \in \mathcal{O}_{\mathbb{Q}(\sqrt{3})}^{(+,-)}$, then $-\alpha \sigma (\alpha) \in \mathbb{Z}^+ \cap I$ and $\alpha -\alpha \sigma (\alpha) \in \mathcal{O}_{\mathbb{Q}(\sqrt{3})}^{+} \cap I$. Therefore $\alpha = (\alpha -\alpha \sigma (\alpha)) - (-\alpha \sigma (\alpha)) \in I'$, which implies that $I \subseteq I'$. This is a contradiction. Therefore, we proved that $\kappa(I)=2$.
\end{example}

Note that $u_s=2u_0$ if $D\equiv 2,3\pmod 4$ and $u_s=2u_0-1$ if $D\equiv 1\pmod 4$, and $u_0=\lfloor\xi_D\rfloor$ in both cases. Thus $u_s\gg \sqrt D$, and so the bound for $\kappa(I)$ from Proposition \ref{propprincipalquadratic} is growing as $\sqrt D$, except for the case of even $s$ and totally positive $\alpha$. 
However, it is conjectured that infinitely many real quadratic fields $\Q(\sqrt D)$ have narrow class number 1 (then $s$ is odd), in which case we know that $\kappa(K)=1$. Thus we expect the bound from Proposition \ref{propprincipalquadratic} to be very bad for infinitely many $D$. While it may be possible to establish a similar bound also for non-principal ideals (in terms of continued fractions of different elements), 
determining the correct order of magnitude of $\kappa(K)$ in general seems to be hard.

\bibliographystyle{plain}
\bibliography{bibliography.bib}

\end{document}